\documentclass[12pt]{article}
\usepackage[T2A]{fontenc}
\usepackage[utf8]{inputenc}
\usepackage{amsmath}
\usepackage{cite}
\usepackage{amsmath,}
\usepackage{amssymb,amsfonts}
\usepackage{algorithmic}
\usepackage{textcomp}
\usepackage{amsthm}

\newtheorem{thm}{Theorem}
\newtheorem{cor}[thm]{Corollary}
\newtheorem{lem}[thm]{Lemma}
\newtheorem{prop}{Proposition}
\newtheorem{exm}[thm]{Example}
\newtheorem{rem}[thm]{Remark}

\DeclareMathOperator{\tr}{\mathrm{tr}}
\DeclareMathOperator{\F}{\mathrm{\mathbb{F}}}


\begin{document}
\begin{center}
	{\LARGE\LARGE \bf
Extended Special Linear group $ESL_2(\mathbb{F})$ and matrix equations in $SL_2(\mathbb{F})$, $SL_2(\mathbb{Z})$, $GL_2(\mathbb{F}_p)$,
 and $M_2(\mathbb{F}_p)$
}

\medskip

	{\bf Skuratovskii Ruslan \textsuperscript{[0000-0002-5692-6123]}}
\smallskip

	{\it ruslcomp@gmail.com }

	


\end{center}	
\bigskip

\section{Abstract }
First time, we introduce Extended special linear group $ESL_2(\mathbb{F})$, which is generalization of the matrix group $SL_2(\mathbb{F})$, where $\mathbb{F}$ is arbitrary perfect field.
We show that $ESL_2(\mathbb{F})$ is a set of all square matrix roots from $SL_2(\mathbb{F})$.
We generalize the group of unimodular matrices \cite{Un} and find its structure.

We generalize the group of unimodular matrices and find a structure of extended symplectic group $ESp_2(\mathbb{R})$ as well as generilized group of unimodular matrices.

The criterions of roots existing for different classes of matrix --- simple and semisimple matrixes from $ SL_2({\mathbb{F}})$, $ SL_2({\mathbb{Z}})$  and $ GL_2({\mathbb{F}})$ are established.
So our criterions oriented on general class of matrix depending of the form of minimal and characteristic polynomials, moreover proposed criterion  holds in $GL_2(\mathbb{F})$ where $\mathbb{F}$ is an arbitrary field.

The problems of square root from group element existing in  $SL_2(\mathbb{F}_p)$, $SL_2(\mathbb{F}_p)$ and $GL_2(\mathbb{F}_p)$ for arbitrary prime $p$ are solved in this paper. The similar goal of root finding was reached in the GM algorithm adjoining an $n$-th root of a generator \cite{GM}
results in a discrete group
 for group $SL(2,R)$, but we consider this question over finite field $\F_p$. Well known the Cayley-Hamilton method \cite{Pell} for computing the square roots of the matrix $M^n$ can give answer of square roots existing over a finite field only after computation of $det M^n$ and some real Pell-Lucas numbers by using Bine formula. Over method gives answer about existing $\sqrt{ M^n}$ without exponenting $M$ to $n$-th power. We only use the trace of $M$ or only eigenvalues of $M$. We have expanded the well-known Cayley-Hamilton method to provide a complete description of the roots in all cases \cite{DON}.

 The authors of \cite{Amit} considered criterion to be square only for the case $\F_p$ is a field of characteristics not equal 2. We solve this problem even for fields $\mathbb{F}_2$ and $\mathbb{F}_{2^n}$.
  The criterion to $g \in SL_2 (\mathbb{F}_2)$ be square in $SL_2(\mathbb{F}_2)$ was not found by them what was declared in a separate sentence in \cite{Amit}.
  In case of field with characteristic 0 there is only the Anisotropic case of  group  $SL_1(\mathbb{Q})$,  where $\mathbb{Q}$ is a quaternion division algebra over $k$  was considered in \cite{Amit}.
Also in \cite{Amit} the split case of $SL_2(k)$ and its powers was considered, where under group splitting authors mean Bruhat decomposition is the double coset decomposition of the group $SL_2(k)$ with respect to the subgroup Borel $B$, consisting of upper triangualar matrix from $SL_2(k)$.
Moreover authors of \cite{Amit}  find recursive formula of matrix root in $SL_2(k)$ only for case of field $char(k) \neq 2$, but our formula solves this problem for a  $\mathbb{F}_p$ with arbitrary characteristic. Futhermore, our formula is analytical and does not require sequential recursive calculation.

The analytical formula of square roots of 2-nd, 3-rd and 4-th power in $SL_2(\mathbb{F}_p)$, $SL_2({\mathbb{F}_p})$ are found. Moreover
we managed to find the recursive formula for calculating the root of an arbitrary degree $n$ from an arbitrary square matrix of dimension 2.
We  find solutions for all cases of roots existing for the root formula from \cite{DON} the equation $X^2=A$, in contrast to the formula proposed in \cite{DON} for which the authors did not find roots in the singular case, when 0 appears in the denominator of the formula.


We investigate a condition of a matrix quadraticity depending on its Jordan structure and spectrum.


\textbf{Key words}: extended special linear group,  equation in matrix group,  splittable  extension, formula of square roots in linear groups, extended symplectic group, set of squares in matrix group, criterion of  square root existing in $SL_2(\mathbb{F}_p)$.\\
\textbf{2000 AMS subject classifications}:  20B27, 20E08, 20B22, 20B35, 20F65, 20B07.

\section{Introduction }

Firstly we introduce new algebraic group that is $ESL_2(\mathbb{F}_p)$ which contains all solutions of  $X^2= A$ for  $A \in  SL_2(\mathbb{F}_p)$.
Then we research the conditions of matrix equation solvability $X^2= A$ in $SL_2(\mathbb{F}_p), GL_2(\mathbb{F}_p)$ and one of splitting extension of $SL_2(\mathbb{F}_p)$ that is $ESL_2(\mathbb{F}_p)$  \cite{SkSq, Square}. 

Our statements  can be easy reformulated for these groups over the field $\mathbb{R}$ so it lead us to arguments of solving of discreteness problem \cite{GM, GM1} in some subgroups of $SL(2,\mathbb{R})$.

One method of computing square roots of two-by-two matrices was presented in
\cite{DON} but under unsolved by him condition which $\sqrt{A}$  exists.
Moreover, the author did not find solutions in the limiting case when the denominator $\tr A  \pm 2 \sqrt{detA}$ tends to 0 so we describe this cases. Also formulas for roots of  3-rd and forth powers were established by us.

But we find not only all such conditions but the method of square root computation for $n \times n$ matrix. Moreover we consider this question also in groups over finite fields. Also we indicate in which group $\sqrt{A}$  lies.


We consider a more general case then \cite{SkSq} consisting in the whole group $G= SL_2(\mathbb{F}_q)$ because of we do not  provide additional condition of  splitting.  Also the authors considered separetely conjugacy classes in $SL_2(\mathbb{F}_q )$  \cite{Amit}  such as:  central classes, split regular semisimple classes, non-semisimple classes, anisotropic regular semisimple classes. For each case the criterion of solvability of equation is provided. In the last two cases Bruhat decomposition is applied.

The previous investigations \cite{Nort, DON} claims that for some matrices in $S{{L}_{2}}\left( \F_{2} \right)$ have not square root in $S{{L}_{2}}\left( {\F_{2}} \right)$. Now we make group classification of roots distribution in which root could exist in splittable extension of group $S{{L}_{2}}\left( {{\text{F}}_{p}} \right)$ over the same field viz it is in $ES{{L}_{2}}\left( {{\text{F}}_{p}} \right)$.
We investigate root distribution of $A \in S{{L}_{2}}\left( {{\text{F}}_{p}} \right)$ by cosets of $ES{{L}_{2}}\left( {{\text{F}}_{p}} \right)$ by the normal subgroup $S{{L}_{2}}\left( {{\text{F}}_{p}} \right)$.

The action of subgroup of new group $ESL_2(\mathbb{F}_p)$ introduced here also arose without description of group structure and generators in the topology.
Namely, if $G$ is a Morse-Bott foliation on the solid Klein bottle K into 2-dimensional Klein bottles parallel to the boundary and one singular circle $S^1$ then such group appears as leaf preserving diffeomorphisms for foliations $G$  \cite{MaksB}.

In many geometrical groups there are automorphisms preserve hyperbolic distance (hyperbolic metric) and hyperbolic angles, furthermore they may change orientation of space as well as keep it permanent \cite{Steinberg}.

In hyperbolic geometry there are groups preserve hyperbolic length \cite{Khan} and orientation as well as changes orientation, in particular projective special linear group $PS{{L}_{2}}(\mathbb{R})$ and $S{{L}_{2}}(\mathbb{R})$ possessing changing orientatio due to action of $S{{L}_{2}}(\mathbb{R})$ is non-faithful because of $PSL_2(\mathbb{R})$ is a homomorphic image of $SL_2(\mathbb{R})$ with non-trivial kernel. A proposed by us group $ES{{L}_{2}}(\mathbb{R})$ also  preserves hyperbolic length \cite{Khan} 




One of interesting algorithmic problem of combinatorial group theory was solved by Roman'kov \cite{Rom}.
 It was problem of determining for any element $g\in G$ is $g$ a commutator for free nilpotent group $N_r$ of arbitrary rank $r$ with class of nilpotency 2 \cite{Rom}.
The analogous problem can be formulated for  $SL_n(\mathbb{F}_q)$, $GL_n(\mathbb{F}_q)$ and $ESL_n(\mathbb{F}_q)$ over a set of squares.


The problem of the solvability of an equation over a group is well known \cite{Klyach, Kunyav}. We consider the same problem with additional constrains on the solvability of an equation of the form $X^2=A$  in a group.

Question of root existing in different forms appears in the Purtzitsky-Rosenberger trace minimizing algorithm \cite{Purtz, GM} it was considered roots and rational powers of one or both generators of    in non-elementary two generator
discrete subgroups of $PSL_2(\mathbb{R})$ found by the GM algorithm. But we solve existing root problem for arbitrary element of $SL_2(\mathbb{F}_p)$.

Also such criterion of root existing for $SL_2(\mathbb{F}_p)$, $SL_2(\mathbb{R})$ and $GL_2(\mathbb{F}_p)$ are established.
This criterion is a stricter version of the formulated question for group extensions how large must an overgroup of a given group be in order to contain a square root of any element of the initial group $G$, which was considered in the paper of Anton A. Klyachko and D. V. Baranov \cite{Klyach}. Our criterion gives the answer that such extension is $ESL_2(\mathbb{F})$ for $SL_2(\mathbb{F})$.

 The main result of this paper about criterion of quadraticity can be extended to larger dimension matrices having a Jordan structure constructed of blocks of dimension 2 or 1. Also, our result for a semisimple matrix of dimension 2 can be generalized to a semisimple matrix of higher dimension.


In this research we continue our previous investigation \cite{SkAbst, SkAbst21,  SkMahn, Square, SkTula}.
%





%

\section{Preliminaries }
To show the importance of studying this group we notice some topological manifolds in which $ESL(\mathbb{R})$ subgroups appear.

An action of a subgroup of $ ESL_2(\mathbb{R})$ appears in leaf preserving diffeomorphism group which is called
foliated leaf preserving in Morse-Bott foliation on the solid torus \cite{MaksTor} of simplest Morse-Bott foliations.
But this action was defined geometrically by symmetries with respect to meridian and parallel of torus and infinite shift on torus also corresponding 3 matrix to these elements were given.
 Indeed Morse-Bott foliation on solid torus \cite{MaksTor} $T = S^{-1} \times D^2$ into 2-tori parallel to the boundary and one singular circle consists of elements presented by matrices with determinant 1 and -1 by author who characterize it as a subgroup of the whole $GL_2(R)$ group. But now we characterize it more precisely as a subgroup of smaller group $ESL_2(R)<GL_2(R)$. The diffeomorphisms group of this manifold  posses the subgroup  $\mathcal{G}$ described in the geometrical terms,  where the actions of shifts, symmetries relative to a parallel to a meridian appear, where shift is generated by $\left(\begin{array}{rr}
   1\,\,&0 \\
  1 \,\,&1
\end{array} \right)$ which is called by reflection.
Symmetries relative to a parallel and a meridian are defined by matrices $\left(\begin{array}{rr}
   1\, &0 \\
  0 \, &-1
\end{array} \right), \ \
\left(\begin{array}{rr}
   -1\,\,&0 \\
  0 \,\,&1
\end{array} \right)$ correspondingly.
This matrices generate group $\mathcal{G}$ which is a proper subgroup of $ESL_2(\mathbb{Z})$.

Define the algebraic properties and structures of $ESL_2(\mathbb{F}_p)$ in the next item.

\textbf{\bf Definition 1.}
{\sl The set of matrices
\begin{equation}\label{ESL}
 \left\{ {{M}_{i}}:Det({{M}_{i}})=\pm 1,  {M}_{i} \in GL_2(\mathbb{F}_p) \right\} \end{equation}

   forms \textbf{extended special linear group}  in $GL_2(\mathbb{F}_p)$ and is denoted by  ${ESL}_2(\mathbb{F}_p)$.

As it is studied by us, $\texttt{ESL}_2(\mathbb{F}_p) \cong SL_2(\mathbb{F}_p)\rtimes \,{{\mathbb{C}}_{2}}$, where $\,{{\mathbb{C}}_{2}}$ is generated by reflection $\left(
\begin{array}{cc}
-1 & 0 \\
 \,\,0 & 1 \\
\end{array}
\right)$. The involution from the top-subgroup $\,{{\mathbb{C}}_{2}} \simeq   \left\langle \left(
\begin{array}{cc}
-1\,\, & 0 \\
0 & 1 \\
\end{array}
\right) \right\rangle  $ induces the sign of automorphism in $Aut\left( SL_2(\mathbb{F}_p) \right)$.
}

It is obvious that ${ESL}_2(\mathbb{F}_p)$ possess presentation in $GL_2(\mathbb{F}_p)$ by matrices described in Definition \ref{ESL} to show it
we establish the homomorphism $\psi$ from $SL_2(\mathbb{F}_p)\rtimes \,{{\mathcal{C}}_{2}}$ to $\texttt{ESL}_2(\mathbb{F}_p)$. We construct $\psi$ sending elements of the semidirect product containing matrix $i$ as an element of top group ${{\mathcal{C}}_{2}}$ in quotient class of ${}^{ {\texttt{ESL}_2(\mathbb{F}_p)}} \diagup {}_{ \texttt{SL}_2(\mathbb{F}_p)}$ having determinant $-1$ and an with matrix $E$ in the qoutient class having determinant 1.

Matrices  with determinant -1 correspond to the elements changing Euclidean space orientation.
As it was found in our study of the roots in matrix groups, solutions of $X^2=A$ arise in defined above group $ESL_2(\mathbb{F}_p)$, where $A \in SL_2(\mathbb{F}_p)$.
 We can spread the definition of $ESL_2(\mathbb{F}_p)$ on case of
  matrices over the arbitrary field  $\mathbb{F}$ as well as over the ring $\mathbb{Z}$.

 Justification of $ SL_2(\mathbb{F}_p),  SL_2(\mathbb{Z})$ extensions existence is based on  the description $Aut\left( SL_2((\mathbb{F_p}) \right), Aut\left( SL_2(\mathbb{Z}) \right) $ and its subgroups of order 2.
 In similar way we can extend $ SL_n(\mathbb{F})$ to $ ESL_n (\mathbb{F}_p)$.

$SL_2(\mathbb{F}_p)$ is subgroup of index 2 in $ESL_2(\mathbb{F}_p)$ so its normality is established.

The existence of a non-trivial homomorphism $\varphi :\,\,{{\mathbb{Z}}_{2}}\to Aut\left( S{{L}_{2}}(\mathbb{Z}) \right)$, as well as $\phi :\,\,{{\mathbb{Z}}_{2}}\to Aut\left( S{{L}_{2}}({{\mathbb{F}}_{p}}) \right)$ can be proved by indicating an element of order 2 in the automorphisms of base group that is the kernel of the semidirect product we want to construct.
There is countergradient automorphism in $S{{L}_{2}}\left( \mathbb{Z} \right)$ that is $\varphi :\,M\to {{\left( {{M}^{T}} \right)}^{-1}}$ or alternating automorphism of order 2 acting by conjugating $\varphi :\,M\to D^{-1}MD$, where $D=\left( \begin{array}{rr}
   1\,\,\, &0 \\
  0\,\, & -1 \\
\end{array} \right)$ and is called by diagonal automorphism \cite{Mersl}.

Recall the \textbf{definition} of $\mathbf{TI-subgroup}$ \cite{Suds,Zu}.  Let $G$ be a group and $A < G$, then $A$ is called $\mathbf{TI-}$subgroup iff  $A \cap A^g = e$ for each $g \in G \setminus N_G(A)$.

\begin{rem}
Subgroup $\mathbb{C}_{2}$ is $\mathbf{TI-subgroup}$ and antinormal subgroup.
\end{rem}

\begin{proof}
In view of ${\mathbb{C}}_{2}$ is one generated then its centralizer coincides with its normalizer. One easy can verify that centralizer consists of all diagonal matrices  from $ESL_2(\mathbb{F}_p)$.
Let us find a structure of such normalizer $N_{ESL_2(\mathbb{F}_p)} ({\mathbb{C}}_{2})$.
In view of e.v. is  invariant under conjugation by non-singular matrix over field the normalizer of top subgroup ${\mathbb{C}}_{2}$ in $ESL_2(\mathbb{F}_p)$ consists of  all diagonal matrices  from $ESL_2(\mathbb{F}_p)$ and permutational matrix ${{\mathcal{P}}}=\left( \begin{array}{rr}
   0\,\,\, &1 \\
  1\,\, & 0 \\
\end{array} \right)$.
We assume that $N_{ESL_2(\mathbb{F}_p)} ({\mathbb{C}}_{2}) \simeq   D(SL_2(\mathbb{F}_p))\rtimes \,{\mathcal{P}}$, where $D(SL_2(\mathbb{F}_p))$ diagonal subgroup of $ESL_2(\mathbb{F}_p)$.

For the rest of elements condition of $A \cap A^g = e$ for each $g \in ESL_2(\mathbb{F}_p) \setminus N_{ESL_2(\mathbb{F}_p)} ({\mathcal{C}}_{2})$ holds. Thus, $\mathbb{C}_{2}$ is $\mathbf{TI-subgroup}$, hence $\mathbb{C}_{2}$ is antinormal subgroup.
\end{proof}

It is obvious that there is a homomorphism in matrix presentation of $ESL_2(\mathbb{F}_p)$ from the semidirect product defining the extension of the group $ SL_2(\mathbb{F}_p)$  as the kernel of the semidirect product, by a group of two matrices, one $E$ the second reflection matrix $i$ inducing changes in the sign of the determinant in $ESL_2(\mathbb{F}_p)$.

 $SL_2(\mathbb{Z})$ is a normal subgroup of $ESL_2(\mathbb{Z})$, as being the kernel of the determinant, which is a group homomorphism whose image is the multiplicative group $ \{-1, +1 \}$.

\begin{rem} It is obvious that orthogonal group $O_2(k) < ESL_2(k)$, where $k$ is a field but $O_2(k) \ntriangleleft  ESL_2(k)$ \cite{Linear, LinearShpringer}.
\end{rem}
In fact, the action by conjugation of elements from the $ESL_2(k)$ does not preserve angles and does not fixe non-degenerate quadratic and Hermitian forms.

We briefly introduce the minimal set of generators and new relations in   $ESL_2\left( \mathbb{Z} \right)$ \cite{AutSLp} i.e. this group over  integer ring.
We denote a matrix of shift
$\left(
\begin{array}{cc}
1 & 1 \\
0 & 1 \\
\end{array}
\right)$
  by $s$ and
  $\left(
\begin{array}{cc}
0 & -1 \\
1 &  0 \\
\end{array}
\right)$
    as $t$  they generate $SL_2\left( \mathbb{Z} \right)$, new generator
$\left(
\begin{array}{cc}
-1 & 0 \\
 0 & 1 \\
\end{array}
\right)$
is denoted by $i$. Each relation of $SL_2\left(  \mathbb{Z} \right)$ holds.
Then new relation is
${{i}}si^{-1}=s^{-1}$. The second relation is ${{i}}ti^{-1}=t^{-1}$ and the rest of them are $t^4=i^2=e$. The order of $s$ is $\infty$ because $s$ is a shift. Note, that $\mathbb{C}_{2}= <i>$.

Note that elements $i$ and $t$ are orthogonal because of $ti=0$. Some interesting relation in this terms of the kernel subgroup $SL(2,\mathbb{Z})$ are ${{t}^{2}}=-E$,  ${{t}^{-2}}s{{t}^{2}}=s$.

Existence justification of such extension of $SL_2(\mathbb{Z})$ by $\mathbb{C}_2$  is based on $Aut(SL_2(\mathbb{Z}))$ \cite{Mart, Mersl, Gur} structure which is splitting  extension $SL_2(\mathbb{Z})$  by $\mathbb{Z}$.
As well known the group of outer automorphisms of $S{{L}_{n}}(\mathbb{Z})$ is semidirect products of the form  $S{{L}_{n}}(\mathbb{Z}){{\rtimes }_{\varphi }}\mathbb{Z}$  and its isomorphism type depends only on  $[\varphi ]\in Out(S{{L}_{n}}(\mathbb{Z}))$.  Since $Aut(SL_2(\mathbb{Z}))$ contains an element of order 2 that is $t^2$  therefore homomorphism from top group that is cyclic group $\mathbb{C}_2 = <i>$ of order 2  in $Aut(SL_2(\mathbb{Z}))$ exists.

The action by right multiplication on  $\left(
\begin{array}{cc}
-1 & 0 \\
0 & 1 \\
\end{array}
\right)$ of a matrix from  $S{{L}_{n}}(\mathbb{Z})$ inducing automorphism inverting sing of first column of matrix $A$.  This  automorphism invert sign of $det(A)$.


 A new geometrical group $\mathcal{G}$ appears as subgroup in the group ${{D}^{lp}}\left( F \right)$  of diffeomorphisms group of $T$ and  $\left[ 0;\text{ }1 \right]$ on ${{C}^{\infty }}(T,\left[ 0;\text{ }1 \right])$ and now be characterized by us in more structural and exact way. Because of the authors \cite{MaksTor} consider $\mathcal{G}$ as subgroup of very wide group $GL(2, \mathbb{Z})$ consisting of matrices for which the vector $(0, 1)$ is eigen with eigenvalue $\pm 1$, which was defined as:
$$\mathcal{G}=\,\left\{ \,\left( \begin{array}{rr}
   \varepsilon \,\,&0 \\
  m\,\, & \delta
\end{array} \right)\left| m\in \mathbb{Z},\,\,\,\varepsilon ,\delta \in \left\{ \pm 1 \right\} \right. \right\}.$$


But $\mathcal{G}$ is a proper subgroup of $ESL_2(\mathbb{Z)}$ that is more special then whole $G{{L}_{2}}\left( \mathbb{Z} \right)$,  moreover $ES{{L}_{2}}\left( \mathbb{Z} \right)$ has as a kernel of semidirect product a proper subgroup of $S{{L}_{2}}\left( \mathbb{Z} \right)$, and $\mathcal{G}$ has in role of kernel a proper subgroup of $S{{L}_{2}}\left( \mathbb{Z} \right)$, because of  $det\left( \mathcal{G} \right)=\pm 1$.
Furthermore the concept of new group  ${ESL}_{2}\left( \mathbb{Z} \right)$ admits us to obtain a structural characterization and set of generators with relations for $\mathcal{G}$.
We take in consideration first generator of $\mathcal{G}$ that is involutions generating symmetry of torus  with respect to the parallel. It is represented by matrix
 $t=\left(\begin{array}{rr}
   1\,\,&0 \\
  0 \,\,&-1
\end{array} \right)$ and generators of the top subgroup of ${ ESL}_{2}\left( \mathbb{Z} \right)$ which is denoted by $i=\left(\begin{array}{rr}
  -1\,\,&0 \\
  0 \,\,\,&1
\end{array} \right)$. One easy can verify that third generator $D$ of $\mathcal{G}$ can be derived from generators of $ESL_2(\mathbb{Z)}$ in the following way $t = - E \times i $, because $-E \in ESL_2(\mathbb{Z)}$.

Now using concept of new group $ESL_2(\mathbb{Z)}$ allows us to give exact and structural characterization of group $\mathcal{G}$ which contains in ${{D}^{lp}}\left( F \right)$. For this goal we consider subgroup of ${ESL}_2(\mathbb{Z)}$ with kernel $K\simeq \left\langle \left( \begin{array}{rr}
   1\,\,\,&0 \\
  1\,\,\,\,&1 \\
\end{array} \right) \right\rangle$. Since $K\simeq \left\langle \left( \begin{array}{rr}
   \,1\,\,\, & 0 \\
  \,1\,\,\, & 1
\end{array} \right) \right\rangle \simeq \mathbb{Z}$  then $AutK\simeq {{\mathbb{Z}}_{2}}$ and therefore homomorphism from subgroup $\left\langle i \right\rangle $ as well as from  subgroup $\left\langle t \right\rangle $ to $AutK$ exist. One easy can check that
$i
\left(
\begin{array}{rr}
1\ & 0 \\
1\ & 1
\end{array}
\right)
i^{-1}=
\left(
\begin{array}{rr}
1\ & 0 \\
-1\ & 1
\end{array}
\right)
=
\left(
\begin{array}{rr}
1\ & 0 \\
1\ & 1
\end{array}
\right)
^{-1}
$
and rest of conjugations remain $K$ invariant.
Thus, we find a structure of $\mathcal{G}$ which, up to a way to define a semidirect product, is $\mathcal{G}\,\simeq K\ltimes \left\langle t,i \right\rangle $.
 An important fact that $K\ltimes \left\langle t,i \right\rangle $
  is a subgroup  in ${ESL}_{2}(\mathbb{Z})$. Top subgroup of $\mathcal{G}$ has 2 generators but kernel subgroup $K$ is one generated, unlike the  kernel in $(\mathbb{Z})$ having 2 generators. If we denote $\left( \begin{array}{rr}
   \,1\,\,\,& 0 \\
     \,1\,\,\,& 1
\end{array} \right)$  then the relations are following $isi={{s}^{-1}},\,\,tst={{s}^{-1}},\,\,\,{{t}^{2}}={{s}^{2}}=e$.

We denote by e.v. --- \textbf{eigenvalue}s.
Let $\mu_A$ be minimal polynomial of $A$.

A polynomial $P(X)$ over a given field $K$ is separable if its roots are distinct in an algebraic closure of $K$, that is, the number of distinct roots is equal to the degree of the polynomial.
\emph{Simple matrix} is a matrix such that characterstic polynomial is separable.

Recall that matrix $A$ is
called \textbf{semisimple} if $\mu_A$ is a product of distinct monic irreducible and
separable polynomials; if moreover all these irreducible polynomials
have degree 1, then $A$  is called split semisimple or diagonalizable \cite{ LinearShpringer, Linear}. 

We denote iff --- necessary and sufficient condition,
e,v.  --- eigenvalue.

\subsection {Some possible applications in topology }

Geometrical transformations corresponding to matrices that form the subgroup of the introduced here $S{{L}_{2}}\left( \mathbb{R} \right) \rtimes {{\mathbb{C}}_{2}} $ group, occur in leaf preserving diffeomorphism group and vector bundle isomorphism $(\xi, \eta)$ in Morse-Bott foliation on the solid Klein bottle \cite{MaksB} (because of matrix $A$ with $det(A) =-1$ change space orientation as on the Klein bottle), with the complementary circle.  


Group of continuous functions implementing rotation $D\left( y \right)$, which is a linear isomorphism preserving concentric circles, simultaneously with a shift as standing a second coordinate of tuple, is founded in \cite{MaksB} by S. I. Maksymenko. Its elements have a form of pair $(w{{e}^{2\pi i{{\lambda }_{h}}(s)}},s)$, where ${{\lambda }_{h}}(s)$ ensures sign inversion provided unit shift (on one).  We see that this group has structure of semidirect  product and denote it by $H$.
Thus, from this group $H$ of diffeomorphisms with additional functions
${{\lambda }_{h}}\left( s+1 \right)=-{{\lambda }_{h}}\left( s \right)$ making changing of sign provided by action of shift on one described in \cite{MaksB} homomorphism in subgroup of $ES{{L}_{2}}(\mathbb{R})$ can be constructed.    Homomorphic image can be realized by matrices of rotation with sign inversion inducing by the top group of semidirect product $ES{{L}_{2}}(\mathbb{R})$ that could be also generated by Frobenius normal form
$\left( \begin{array}{rr}
   0\,\, &-1 \\
  1\,\,& 0 \\
\end{array} \right)$.
Thus this subgroup of $ES{{L}_{2}}(\mathbb{R})$ can be embedded in $H$ and this subgroup  is realized by matrices of rotation with sign inversion due to the top group of semidirect product $ES{{L}_{2}}(\mathbb{R})$.
One of subgroup of our new group $ES{{L}_{2}}(\mathbb{R})$ is embedded in $H$. This subgroup is
$  SO(2)   \ltimes \left\langle \left( \begin{array}{rr}
 -1\,\,\,& 0 \\
  0\,\,\, &1 \\
\end{array}  \right) \right\rangle  \simeq : O(2)  $.
We additionaly denote this subgroup by $\left\langle\rho \right\rangle\ltimes \left\langle i  \right\rangle$.

Previously, a definition of an extended symplectic group was formulated for instance in \cite{Dan}, in terms of this paper a group of extended group is described as group of symplectic matrices with $det(M)=\pm1$, and denoted by $ESL\left( 2,{{\mathbb{Z}}_{\overline{d}}} \right)$ on page 4. But its structure was not found \cite{Dan, APPLEBY, Busch}.

We define it as the group of \emph{symplectic matrices} with $\det \left( M \right)=\pm 1$ additionally \textbf{find structure of extended symplectic matrices} and propose more convenient and usual notification of this group.

\emph{Extended symplectic group} be denoted by $ESp_2(\mathbb{R})$ is the group all symplectic matrices  having determinant $\det \left( M \right)=\pm 1$.
Thereby, \textbf{extended symplectic group} is subgroup of our group $ES{{L}_{2}}(\mathbb{R})$ and has the structure of semedirect product $ES{{p}_{2}}(\mathbb{R})\,\,\equiv \,\,S{{p}_{2}}(\mathbb{R})\rtimes {\mathbb{C}_{2}}$, where ${\mathbb{C}_{2}}$ is defined above, also symplectic group $S{{p}_{2}}(\mathbb{R})$ is the kernel of the semidirect product.  Note that ${\mathbb{C}_{2}}$ can be generated not only by $i$ but by matrix
$
\left( \begin{array}{rr}
1\,\,\,& 0 \\
0\,\,\, & -1 \\
\end{array} \right)
$
too. The justification of established structure is same as for $ES{{L}_{2}}(R)$.

As well known even symplectic  group has some applications \cite{APPLEBY, Busch}.

It is obvious that $ES{{p}_{2}}(R)\,\,<ES{{L}_{2}}\left( R \right)$.
We can spread concept of extended symplectic group on ring by considering $ES{{p}_{2}}(\mathbb{Z})$ and $ES{{p}_{2}}({{\mathbb{Z}}_{k}})$.
Then using finding by us structure \[ES{{p}_{2}}({{\mathbb{Z}}_{\overline{d}}})\,\, \simeq \,\,S{{p}_{2}}({{\mathbb{Z}}_{\overline{d}}})\rtimes {\mathbb{C}_{2}}\]
we can establish the structure of extended Clifford group more precisely  and apply it in theorem 2 \cite{APPLEBY} to describe a unique surjective homomorphism from extended Clifford group to group of Clifford operations which was used in \cite{Dan} in following homomorphism
${{f}_{E}}:\,\,\,\left( S{{p}_{2}}({{\mathbb{Z}}_{\overline{d}}})\rtimes {\mathbb{C}_{2}}  \right)\ltimes {{\left( {{\mathbb{Z}}_{\overline{d}}} \right)}^{2}}\to {}^{EC\left( d \right)}/{}_{I\left( d \right)}$
satisfying condition (110) from \cite{APPLEBY}.

In terms and notation of D. M. Appleby \cite{APPLEBY}, taking into consideration established here structure of $ESL(2,\mathbb{Z})$, the Clifford group from Theorem 2 takes form: $\left( SL\left( 2,{{\mathbb{Z}}_{\overline{d}}}\right) \rtimes {\mathbb{C}_{2}} \right)\ltimes {{\left( {{\mathbb{Z}}_{\overline{d}}} \right)}^{2}}$ wherein condition (110) from \cite{APPLEBY} holds.


 Note that group of the diffeomorphisms $h$ coinciding with some vector bundle morphism also function ${{\lambda }_{h}}:\,\,\mathbb{R}\to \mathbb{R}$ is described in item 3) of \cite{MaksB}, there are subgroup $h'(w,s)=\left( {{e}^{2\pi i{{\lambda }_{h}}(s)}},s \right),\,\,\,\,\,{{\lambda }_{h}}\left( s+1 \right)=-{{\lambda }_{h}}\left( s \right)$ presented in form of functions.  Now we can describe its structure as semidirect product. We establish a homomorphism from this group to $\left\langle\rho \right\rangle\ltimes \left\langle i  \right\rangle$.
    Furthermore the top group of $ES{{L}_{2}}(\mathbb{R})$ is the same matrix
$i= \left( \begin{array}{rr}
 -1\,\,\,& 0 \\
  0\,\,\, &1 \\
\end{array}  \right)$ coinciding with a matrix $\Lambda $ presenting the meridian of torus respect to the parallel \cite{MaksTor}.

The subgroup
of diffeomorphism $D\left( {{L}_{p,q}} \right)$ of $L_{p,q}$ is under consideration in \cite{MaksTor},
 whence a group closure of $D\left( {{L}_{p,q}} \right)$ is just $ES{{L}_{2}}(\mathbb{Z})$ but algebraic structure of set was not investigated before so it was classified in \cite{MaksTor}
  as the matrix subset of $G{{L}_{2}}\left( \mathbb{Z} \right)$  with determinant -1 also there is transformation $T$  in that item with $\det (T)=1$.

Thus, there are many subgroup of $ES{{L}_{2}}(\mathbb{Z})$ and whole $ES{{L}_{2}}(\mathbb{Z})$ appear in nature but it was not defined and investigated as algebraic group before.

In the model of rotations in the knee joint between the thigh and shin, which form the knee joint while being on opposite sides of the secant plane passing through the joint. Thus, the surfaces of the thigh and lower leg are on opposite sides of the cutting plane passing through the knee joint. Therefore, to specify a rotation operator in a single basis, you need exactly the operator represented by a matrix from the $ESL_2(\mathbb{R})$ group. By the same reason operators from our group can be applied in geoinformation systems \cite{Iat}.

Let $A=\left( \begin{array}{rr}
   0\,\, &-1 \\
  1\,\,\,& 0
\end{array} \right)$ so ${{B}_{1}}=\sqrt[{}]{A}=\frac{1}{\sqrt[{}]{2}}\left( \begin{array}{rr}
   2\,\,\,\, & - 1 \\
  1\,\,\,\,\,\, &2
\end{array} \right),$  ${{B}_{2}}=\sqrt[{}]{A}=\frac{1}{\sqrt[{}]{2}}\left( \begin{array}{rr}
   1\,\,\,\,&1 \\
  -1\,\,\,\,\,& 1
\end{array} \right).$  Thus we can present their product as a factorization of a matrix \[2{{B}_{1}}{{B}_{2}}=\left( \begin{array}{rr}
   2\, \,\,\,\,&-1 \\
  1\,\,\,\,\,\,& 2 \\
\end{array} \right)\left( \begin{array}{rr}
 1\,\,\,\,\,\,\, & 1 \\
 -1\,\,\,\,\,\,  & 1
\end{array} \right)=\left( \begin{array}{rr}
 \,\,\, 3\,\,\, & 1 \\
  -1\,\, \, & 3
\end{array} \right). \]
Besides this new method of matrix factorization \emph{\textbf{due to our square root existence criterions}} can be provided.
If $M$ possesses the presentation  $M=A-C$, where $A={{B}^{2}},\,C={{D}^{2}}$, then $M$ can be factorized in the following way $M=\left( B-D \right)\left( B+D \right)$. Therefore it is important to have quick method of square root existence checking in $S{{L}_{2}}\left( \text{F} \right)$. Analogously if $M$ admits  the presentation  $M=A-C$, where $A={{B}^{3}},\,C={{D}^{3}}$, then a factorization of $M$ is possible due to our formulas presented below.



\section {Criterion of an element root existing in $GL_2(\mathbb{F}_p)$, $SL_2(\mathbb{F}_p)$ and its formulas}

\subsection{Conditions of root existing in group and overgroup}
Let $SL_2(\mathbb{F}_p)$  denotes the special  linear group of degree 2 over a finite field of order $p$. And a degree always means an irreducible character degree in this paper.

We recall the well known relation between eigenvalues of $A$ and $f(A)$.
\begin{lem}\label{eigenval}
If $\beta $ is an eigenvalue for $B$ then ${{\beta }^{2}}$ is an eigenvalue for ${{B}^{2}}$.
\end{lem}

Consider the criterion of squareness of elements in $SL_2(\mathbb{F}_p)$ as well as in $GL_2(\mathbb{F}_p)$ which can be presented by diagonal matrix. As well known \cite{LinearShpringer} a matrix can be presented in the diagonal form iff the algebraic multiplicity of its eigenvalues are the same  as the geometric multiplicity.




\begin{thm}\label{CriterionPSL}
Let $A$ be simple or scalar matrix and
$A \in SL_2(\mathbb{F})$  \cite{LinearShpringer}, then
for  $A $ there is a solution  $B \in SL_2(\mathbb{F})$ of the matrix equation
 \begin{equation}\label{Eq}
X^2 = A
\end{equation}
if and only if
\begin{equation}\label{tr}  {\tr A+2}
\end{equation}

 is quadratic element in $\mathbb{F}$ or 0, where $\mathbb{F}$ is a field.


If  $X \in ESL_2(\mathbb{F})$ then the matrix equation  \eqref{Eq} has a solutions iff
\begin{equation}\label{trESL}
{\tr A \pm 2}
\end{equation}

 is quadratic element in $\mathbb{F}$ or 0.

This solution $X \in ESL_2(\mathbb{F}) \setminus SL_2(\mathbb{F})$ iff $\left({tr A - 2}{}\right)$ is quadratic element or 0 in $\mathbb{F}$ but $\left({\tr A + 2}{}\right)$ is not. Conversely $X \in SL_2(\mathbb{F})$ iff $\left({\tr A + 2}{}\right)$ is quadratic element. Solutions belong to $ ESL_2(\mathbb{F})$ and $SL_2(\mathbb{F})$ iff $\left({\tr A + 2}{}\right)$ and $\left({\tr A - 2}{}\right)$ are quadratic elements.



In the case $A \in GL_2 ({{\mathbb{F}}_{}})$ this condition \eqref{tr} takes form:
\begin{equation}\label{trGL}
 {\tr A \pm 2\sqrt{\det A}}
\end{equation}
 is quadratic element in $\mathbb{F}$ or 0 and $\det A$ is quadratic element.


\end{thm}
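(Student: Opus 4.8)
The plan is to reduce the problem to finding a square root of the special form $B=xA+yI$ and then to solve the resulting scalar equations by the Cayley--Hamilton identity. First I would observe that if $A$ is simple its characteristic polynomial has distinct roots, so over the algebraic closure $A$ is diagonalizable with distinct eigenvalues; hence every matrix commuting with $A$—in particular every square root $B$, since $B$ commutes with $B^2=A$—lies in the commutative algebra $\mathbb{F}[A]=\{xA+yI : x,y\in\mathbb{F}\}$. This confines the search to the two-parameter family $B=xA+yI$. The scalar case $A=\pm I$ must be handled separately, since there $\mathbb{F}[A]$ is too small to contain all roots.

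For necessity I would invoke Lemma~\ref{eigenval}: if $\beta_1,\beta_2$ are the eigenvalues of $B$, then $\beta_1^2,\beta_2^2$ are those of $A$, so $\det B=\beta_1\beta_2$, $(\det B)^2=\det A$, and
\begin{equation*}
(\tr B)^2=(\beta_1+\beta_2)^2=(\beta_1^2+\beta_2^2)+2\beta_1\beta_2=\tr A+2\det B .
\end{equation*}
Thus $\det A$ must be a square, and for $\det B=1$ one gets $(\tr B)^2=\tr A+2$, while for $\det B=-1$ one gets $(\tr B)^2=\tr A-2$; this forces the stated trace quantity to be a square in $\mathbb{F}$.

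For sufficiency I would construct $B$ explicitly. Writing $B=xA+yI$ and using $A^2=(\tr A)A-(\det A)I$ gives $B^2=[x^2\tr A+2xy]A+[y^2-x^2\det A]I$, so $B^2=A$ is equivalent to $y^2=x^2\det A$ together with $x^2(\tr A\pm2\sqrt{\det A})=1$, the sign matching $y=\pm x\sqrt{\det A}$. When $\det A=1$ this produces the closed forms $B=\tfrac1t(A+I)$ with $t^2=\tr A+2$ and $B=\tfrac1s(A-I)$ with $s^2=\tr A-2$. Using $\det(A\pm I)=\det A\pm\tr A+1$ I would verify that the first root satisfies $\det B=1$ (so $B\in SL_2(\mathbb{F})$) and the second satisfies $\det B=-1$ (so $B\in ESL_2(\mathbb{F})\setminus SL_2(\mathbb{F})$). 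Combining the two constructions with the necessity computation yields every localization claim, and the $GL_2$ statement follows verbatim by keeping $\det A$ general—the extra hypothesis that $\det A$ be a square being exactly what makes $\sqrt{\det A}\in\mathbb{F}$ and hence $B$ defined over $\mathbb{F}$.

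The scalar and ``or $0$'' cases I would settle directly: a simple matrix cannot have $\tr A=\pm2$ (that would give the characteristic polynomial a repeated root), so the value $0$ arises only for $A=\pm I$; for $A=-I$, where $\tr A+2=0$ and the linear-combination formula degenerates, the matrix $\left(\begin{array}{rr} 0 & -1 \\ 1 & 0 \end{array}\right)\in SL_2(\mathbb{F})$ squares to $-I$, and for $A=I$ one takes $B=I$. I expect the main obstacle to be carrying out the centralizer reduction $B\in\mathbb{F}[A]$ uniformly over an arbitrary field, covering both the split and the anisotropic simple cases, and tracking the degeneracies in characteristic $2$, where $+2=-2$ and $SL_2=ESL_2$; once that reduction is secured, the remaining steps are short Cayley--Hamilton computations.
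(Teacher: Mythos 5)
Your proposal is correct, and its necessity half --- the identity $(\tr B)^2=\tr A+2\det B$ obtained from Lemma \ref{eigenval}, combined with $\det B=\pm1$ --- is the same computation the paper makes. Your sufficiency half, however, takes a genuinely different route. The paper builds the candidate characteristic polynomial $\chi_B(x)=x^2-cx+1$ with $c^2=\tr A+2$, realizes it by a companion (Frobenius) matrix, checks via Vieta's formulas that $\chi_{B^2}=\chi_A$, and then transfers the root to $A$ itself through a similarity transformation, which requires knowing that matrices with the same separable characteristic polynomial are conjugate over $\mathbb{F}$. You instead verify the closed formula $B=\tfrac{1}{t}(A\pm E)$, $t^2=\tr A\pm2$, directly from Cayley--Hamilton; in effect you fold the paper's later Proposition \ref{formula} into the proof of Theorem \ref{CriterionPSL}. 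This buys several things the paper's argument does not give explicitly: (i) your centralizer reduction shows every root lies in the two-parameter family $xA+yE$, so the formula exhausts all solutions and the coset-localization claims follow at once from $\det(A\pm E)=\det A\pm\tr A+1$, whereas the paper's conjugacy argument only produces some root; (ii) the argument is uniform over an arbitrary field, including characteristic $2$, where the two trace conditions collapse to ``$\tr A$ is a square'' and the same verification goes through verbatim. Note also that the ``main obstacle'' you anticipate at the end is not actually one: any non-scalar $2\times2$ matrix has minimal polynomial of degree $2$, hence is cyclic, and the centralizer of a cyclic matrix in $M_2(\mathbb{F})$ is exactly $\mathbb{F}[A]$ over every field and in every characteristic, with no case split between split and anisotropic $A$. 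The one point where you are thinner than the paper is the scalar case in $GL_2$ (e.g. $A=\lambda E$ with $\tr A-2\sqrt{\det A}=0$), where the linear-combination ansatz degenerates and the paper exhibits explicit non-diagonal roots such as the companion matrix of $x^2-\lambda$; your treatment of the $SL_2$ scalar matrices $A=\pm E$ is adequate, but the $GL_2$ statement needs the analogous supplement.
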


\begin{proof}
Throughout the proof a quadraticity of element $x$ or $x=0$ in a field $\mathbb{F}$ be denoted by $\left(\frac{x}{p}\right) \in \{0,1\}$. For concretization, we provide a proof over $\mathbb{F}_p$.
But out prove can be spread without changes on \emph{arbitrary field} $\mathbb{F}$ instead $\mathbb{F}_p$.

We assume that matrices $A$ and $B$ have eigenvalues $\lambda_1, \lambda_2 $ and $\mu_1, \mu_2 $ respectively.
Let a characteristic polynomial ${{\chi }_{B}}(x)$ of $B$ be the following: ${{\chi }_{B}}(x) = (x - \mu_1)(x - \mu_2)$. We denote $tr(A)$ by $a$.

 Since  $det(A),  A \in SL_n(\F_p)$ is 1, then eigenvalues of $A$ satisfy the following equality: $\mu_1^2\mu_2^2 = 1 $ that implies $\mu_1\mu_2 = \pm 1$. Therefore $a + 2\mu_1\mu_2 = a \pm2 = (\mu_1+\mu_2)^2$. As is known $tr(B)=\mu_1+\mu_2 \in  \F_p$ and $det(B)=\mu_1 \mu_2 \in \F_p$.
 Then according to Lemma \ref{eigenval} $a$ is the sum of the roots $\mu_1^2$, $\mu_2^2$ of a polynomial ${{\chi }_{A}}(x)=(x-\mu_1^2)(x-\mu_2^2)$. Hence $tr(A)= a = \mu_1^2 + \mu_2^2 = (\mu_1 + \mu_2)^2 - 2\mu_1\mu_2 = (tr(B))^2-2$.
 So,
${tr}(A)+2=c^2$ for $c={tr}(B)$.

In  case $\mu_1\mu_2 = - 1$ we express $tr(A)$  as $tr(A)= a = \mu_1^2 + \mu_2^2 = (\mu_1 - \mu_2)^2 - 2\mu_1\mu_2 = (tr(B))^2+2$  and conclude that ${tr}(A)-2=c^2$  is quadratic residue in this case.  It  yields that the solutions  $\pm B \in   ESL_2(\mathbb{F}) \setminus SL_2(\mathbb{F})$.


We show the existence of ${{\chi }_{B}}(x):={{x}^{2}}-cx+1$ having roots ${{\mu }_{1}},\,\,{{\mu }_{2}}$ which will be the e.v. of $B$.
Let ${{\chi }_{{{B}^{2}}}}(x)={{\mu }^{2}}-a\mu +1$. Then $\mu _{1}^{2},\,\,\,\mu _{2}^{2}$ are e.v. for $A$ and according to Viet's theorem, $\mu _{1}^{2}+\mu _{2}^{2}=a.$

Let us prove the sufficiency of the condition $(\frac{\tr A+2}{p})=1$.
According to Viet Theorem ${{\mu }_{1}}+{{\mu }_{2}}=c$ and ${{\mu }_{1}}+{{\mu }_{2}}=Tr(B)$, also ${{c}^{2}}=\tr A+2$ by construction of ${{\chi }_{B}}(x)$.

We assume that ${{\chi }_{B}}\left( x \right):={{x}^{2}}-cx+1=\left( x-{{\mu }_{1}} \right)\left( x-{{\mu }_{2}} \right)$, where $c:=\pm \sqrt{tr\left( A \right)+2}$, is characteristic polynomial for $B$  and ${{\chi }_{A}}\left( x \right):={{x}^{2}}-ax+1=\left( x-{{\lambda }_{1}} \right)\left( x-{{\lambda }_{2}} \right)$, where $a=tr\left( A \right)$.
To provide justification that ${{\chi }_{B}}\left( x \right)$ is characteristic polynomial of $\sqrt{A}$,  which denoted by$B$,   we consider ${{\chi }_{{{B}^{2}}}}\left( x \right)=\left( x-\mu _{1}^{2} \right)\left( x-\mu _{2}^{2} \right)$ and prove that ${{\chi }_{{{B}^{2}}}}\left( x \right)={{\chi }_{A}}\left( x \right)$ by showing coinciding of their coefficients. For this goal we have constructed  ${{c}^{2}}:=tr\left( A \right)+2$,  in another hand $c={{\mu }_{1}}+{{\mu }_{2}}$ and by condition of theorem $tr\left( A \right)+2$ is quadratic residue or 0.
Consider the sum $\mu _{1}^{2}+\mu _{2}^{2}={{\left( {{\mu }_{1}}+{{\mu }_{2}} \right)}^{2}}-2{{\mu }_{1}}{{\mu }_{2}}={{c}^{2}}+2-2{{\mu }_{1}}{{\mu }_{2}}={{c}^{2}}+2-2=\tr A=a$, according to Viet theorem $\mu _{1}^{2}+\mu _{2}^{2}$ is coefficient of linear term in ${{\chi }_{{{B}^{2}}}}$. The free term of ${{\chi }_{{{B}^{2}}}}\left( x \right)$ as well as of ${{\chi }_{A}}\left( x \right)$ equals to 1 as products of e.v. $\mu _{1}^{2}\mu _{2}^{2}=Det\left( B ^2 \right)$ and ${{\lambda }_{1}}{{\lambda }_{2}}=1$ because of ${{B}^{2}},  A\in S{{L}_{2}}\left( \mathbf{F} \right)$.  Thus coefficients of ${{\chi }_{{{B}^{2}}}}\left( x \right)$ and ${{\chi }_{A}}\left( x \right)$ coincide providing an equality of these polynomials.
So, their eigenvalues are the
same too. Also these eigenvalues are different. Hence these matrices are conjugated.

For the case of generalization on $GL_2(\F_p)$ the proof is the similar but with new absolute term in ${{\chi }_{B}}$.
Let $\det A=D$ and $D={{d}^{2}}$  if $\tr A+2\sqrt{\det A}$ is quadratic element then we construct ${{\chi }_{B}}\left( x \right)={{x}^{2}}-cx+d$, with $d=\pm \sqrt{D}$,  then ${{d}^{2}}=\mu _{1}^{2}\mu _{2}^{2}$, where ${{\mu }_{1}},\,\,{{\mu }_{2}}$ are e.v. of $B$. Consequently ${{\chi }_{{{B}^{2}}}}\left( x \right)={{x}^{2}}-\left( {{c}^{2}}-2 \right)x+{{d}^{2}}$ in the same time ${{\chi }_{A}}\left( x \right)={{x}^{2}}-tr\left( A \right)x+\det \left( A \right)$. Thus, these polynomials have the same coefficients, as in case of $S{{L}_{2}}\left( {{F}_{p}} \right)$. So ${{B}^{2}}$ and $A$ are conjugated matrices.

Consider the case of scalar matrix in $\mathbb{G}{{L}_{2}}\left( {{\text{F}}_{p}} \right)$.
Show that a characteristic polynomial also exists, in view of $c=\tr A-2\sqrt[{}]{\det A}=2\lambda -2\sqrt[{}]{{{\lambda }^{2}}}=2\lambda \pm 2\lambda.$ That is equal to
$$
2\lambda \pm 2\lambda =
\left[
\begin{array}{rr}
0 \,\,\,\,\,\, \mbox{iff} &  \,\, \sqrt[{}]{\det A}=-\lambda,\\
\,4\lambda \,\,\,\, \mbox{iff} &  \,\, \sqrt[{}]{\det A}=\lambda.
\end{array}
\right.
$$

The value $4\lambda =\tr A+2\sqrt[{}]{\det A}$ is declaimed in the condition \eqref{trGL} as quadratic residue, therefore $4\lambda \in {\text{F}}_{p}$. Also absolute term $d$ is  $\sqrt[{}]{\det A}=\sqrt[{}]{{{\lambda }^{2}}}=\pm \lambda \in \F_p $
because of  both elements $\lambda$  on diagonal and rest of elements is 0 moreover all conjugated matrices to a scalar matrix $A$ coincide with $A$ because $A$ in centre, that’s why $\lambda \in \F_p$.
Thus, the coefficients $c,$ $d \in {\text{F}}_{p}$, so such $B$ exists in $SL_2({\text{F}}_{p})$. In case $A \in \mathbb{S}{{L}_{2}}\left( {{\text{F}}_{p}} \right)$  our expression takes form  $\tr A-2\sqrt[{}]{\det A}=2 \pm2$ and its values are always squares.

In the case of diagonal matrix which is not scalar (case of simple matrix) we get $d =\pm \sqrt{\det A}$ but under additional condition to \eqref{trGL} $\det A$ is quadratic residue, hence we have $\pm \sqrt[{}]{\det A} \in \F_p$.


The structure of matrix roots $B_i$ of \emph{\textbf{exceptional limiting case}}, when  $\tr A+2=0$ corresponds to a scalar matrix $A= -E$  in   $S{{L}_{2}}\left( \mathbb{F} \right)$,
 then
$$
B_1=
\left(
\begin{array}{rr}
\pm\lambda & 0\\
0 & \pm \lambda
\end{array}
\right)
, \
B_2=
\left(
\begin{array}{rr}
0 & \pm\lambda\\
\mp\lambda & 0
\end{array}
\right)
, \
B_3=
\left(
\begin{array}{rr}
0 & 1\\
\lambda & 0
\end{array}
\right), \,
B_4=
\left(
\begin{array}{rr}
0 & \lambda\\
1 & 0
\end{array}
\right)
,$$
where  $\lambda^2=-1$.  It is obviously that this root exists if -1 is quadratic element in ${\mathbb{F}}_{}$, whence we see $ B_1, B_2$ are elements of $ESL_2(\mathbb{F}_p)$. If  $\tr A-2=0$ then we construct the same roots  but with condition  $\lambda^2=1$.

An outstanding case provided by $\lambda^2=1$ is Jordan form $J_A=
\left(
\begin{array}{rr}
\lambda & 1\\
0  & \lambda
\end{array}
\right)
,$
possess the solutions  $S_1=
\left(
\begin{array}{rr}
\pm 1 & \frac{1}{\pm 2}\\
    0 & \pm 1
\end{array}
\right)
$ from $ S{{L}_{2}}\left( \mathbb{F} \right)$ and

$ G_1=
\left(
\begin{array}{rr}
\pm \sqrt{\lambda} & \frac{1}{\pm 2 \sqrt{\lambda}}\\
    0 & \pm \sqrt{\lambda}
\end{array}
\right)
$ belonging to $GL_{2} \left( \mathbb{F} \right)$.

If  $A \in G{{L}_{2}}\left( \mathbb{F} \right)$ and satisfies \eqref{trGL}
then the case  $\tr A-2\sqrt{\det A}=0$,  where  $A= \lambda E$  implies that  $\tr A =2 \lambda$,  and its  roots
$$
\sqrt{A}=
\left(
\begin{array}{rr}
\pm\lambda & 0\\
0 & \pm \lambda
\end{array}
\right)
, \,
\sqrt{A}=
\left(
\begin{array}{rr}
0 & \pm\lambda\\
\mp\lambda & 0
\end{array}
\right)
, \,
\sqrt{A}=
\left(
\begin{array}{rr}
0 & 1\\
\lambda & 0
\end{array}
\right)
, \, \sqrt{A}=
\left(
\begin{array}{rr}
0 & \lambda\\
1 & 0
\end{array}
\right)$$
where  $\lambda^2=1$.  Note all roots are conjugated in view of scalar structure of  $A$.

The case  $\tr A-2=0$  implies that  $A= E$ , so its  roots
$$
\sqrt{A}=
\left(
\begin{array}{rr}
\pm\lambda & 0\\
0 & \pm \lambda
\end{array}
\right)
, \
\sqrt{A}=
\left(
\begin{array}{rr}
0 & \pm\lambda\\
\mp\lambda & 0
\end{array}
\right)
, \
\sqrt{A}=
\left(
\begin{array}{rr}
0 & 1\\
\lambda & 0
\end{array}
\right)
,$$
where  $\lambda^2=1$.

The
sequence of e.v., corresponding to the limiting case ($\underset{{{\lambda }_{i}}\to 1}{\mathop{\lim }}\,\left( Tr{{A}_{i}}+2 \right)=0$), is ${{\lambda }_{i}}+\frac{1}{{{\lambda }_{i}}}\to 2$. In this sequence matrices are simple and have diagonal form as
well as their roots have limiting form. But the limiting case admits not diagonal structures
of roots, where  all roots are conjugated
i.e. similar matrix. Indeed if  $A'$ and $A$ are similar matrix and ${{\left( B'
\right)}^{2}}=A'$  then  ${U^{-1}}A'U={U^{-1}}{{(B')}^{2}}U={{U}^{-1}}B'U{{U}^{-1}}B'U={{B}^{2}}=A$
so $B={{U}^{-1}}B'U$.






Let us construct the solution of equation $X^2=A$ in $SL_2(\F_p)$. In a general case we obtain the solution \[{{B}^{2}}=A',\] where $A'\sim A$ with eigenvalues ${{\lambda }_{1}}=\mu _{1}^{2},\,\,\,{{\lambda }_{2}}=\mu _{2}^{2}.$     Since $c \in {{\text{F}}_{p}}$ then we can construct in normal Frobenius form a matrix
\[\left( \begin{array}{rr}
  0\,\,& -1 \,  \\
  1\,\, &c \, \\
\end{array} \right)={{B}_{}}\]

 therefore this matrix is over base field ${{\text{F}}_{p}}$ or ${{\text{Q}}}$ or arbitrary field $\mathbb{F}$.
Since ${{\lambda }_{1}}+{{\lambda }_{2}}={{(\mu _{1}^{}+\mu _{2}^{})}^{2}}-2=\tr A$ and  that is why ${{\left( \mu _{1}^{{}}+\,\mu _{2}^{{}} \right)}^{2}}=\tr A+2$ this equality holds iff  $(\frac{\tr A+2}{p})=1$. Thus, the condition $(\frac{\tr A+2}{p})=1$ is  sufficient for existing of ${{\chi }_{b}}(x)$.
But it remains to show that these eigenvalues $\sqrt{\lambda_1}=\mu_1$, $\sqrt{\lambda_2}=\mu_2$ are the roots of the characteristic polynomial $\chi_B(x)$.

By the condition of theorem $\tr A+2$ is a quadratic residue or 0, there is $\sqrt{tr(A)+2}=\sqrt{{{({{\mu }_{1}}+{{\mu }_{2}})}^{2}}}$ in ${{\text{F}}_{\text{p}}}$, whence $tr(\text{B})\in {{\text{F}}_{\text{p}}},\,\,\det B\in {{\text{F}}_{\text{p}}}$ holds in view of well known theorems, therefore ${{\chi }_{B}}(x)$ has coefficients $c=\sqrt{tr(A)+2} = {\mu }_{1}+{\mu }_{2}$
 in ${{\text{F}}_{\text{p}}}$, hence $B$ presented in the Frobenius normal form belongs to $S{{L}_{2}}({{\text{F}}_{\text{p}}})$.

  Furthermore $B$  having  e.v.  ${{\mu }_{1}},\,\,{{\mu }_{2}}$ is the matrix over ${{\text{F}}_{\text{p}}}$, but ${{\mu }_{1}},\,\,{{\mu }_{2}}$ can be from ${{\text{F}}_{{{\text{p}}^{2}}}}\backslash {{\text{F}}_{\text{p}}}$.
\end{proof}

\begin{exm}
    Consider Fibonacci matrix $F=\left( \begin{array}{rr}
        0 &  \,\, 1  \\
        1 & \,\,  1
    \end{array}
   \right)$ in $S{{L}_{2}}\left( {\F_{p}} \right)$ then ${F^{2}}=\left( \begin{array}{rr}
      1  &  \,\,1\\
       1 & \,\,2
   \end{array}
   \right)$ which confirms criterion \ref{CriterionPSL} of existing roots in $ES{{L}_{2}}\left( { \mathbb{F}_{p}} \right)$ because $\tr A-2=1$ because of 1 is square in each field ${\mathbb{F}_{p}}$ as well as in $\mathbb{Q}$ and $\mathbb{R}$.

Next one is
$R=\left( \begin{array}{rr}
      0  &  \,\,-2\\
       2 & \,\, 0
   \end{array}
   \right)$
 in $S{{L}_{2}}\left( {\mathbb{F}_{3}} \right)$ then
 $R^2=\left( \begin{array}{rr}
      -1  &  \,\,0\\
       0 & \,\, -1
   \end{array}
   \right)$.

  In another hand we can justify the root existing by criterion for $ES{{L}_{2}}\left( {\mathbb{F}_{3}} \right)$ because $tr{{R}^{2}}-2=0$.
\end{exm}

\begin{exm}
The case of roots belonging to both cosets of quotient ${}^{ES{{L}_{2}}\left( \mathbb{F} \right)}/{}_{S{{L}_{2}}\left( \mathbb{F} \right)}$ appears, in particular, for matrix $A$ with $tr(A)=3$ and $\mathbb{F}=\mathbb{F}_{11}$. In fact, in this case $tr(A)-2=1$, $tr(A)+2=5$ one easily can verify that 5 is quadratic residue by $\bmod11$ because of ${{4}^{2}}\equiv 5\left( \bmod 11 \right)$ and  $1$ is always square.
\end{exm}

\begin{exm}
    Consider a case when  roots are only from $ESL_2(\mathbb{ Z})$, let $A=\left( \begin{matrix}
   3 & 2  \\
   4 & 3  \\
\end{matrix} \right)$. Here  $\tr A-2$ is square.
$\tr A -2\text{ }=\text{ }4$  but $\tr A+2\text{ }=\text{ 8}$ is not square in $\mathbb{ Z}$. The square roots

\[B=~\frac{\pm 1}{\sqrt{4}}\left( \begin{matrix}
  2\,\,2 \\
  4\,\,2 \\
\end{matrix} \right)~= \pm  \left( \begin{matrix}
   1\,&\,1 \\
  2\,&\,1 \\
\end{matrix} \right), ~\]
therefore $B\in ESL_2(\mathbb{ Z})$.
\end{exm}

\begin{exm}
Consider matrix equation $X^2=A$ with e.v. in ${{\mathbb{F}}_{9}}\backslash {{\mathbb{F}}_{3}}$ and having root in $ES{{L}_{2}}\left( {{\mathbb{F}}_{3}} \right)\backslash S{{L}_{2}}\left( {{\mathbb{F}}_{3}} \right)$ and check our new formula from Proposition \ref{formula} (which is directly below) for root expression. Let $A=\left( \begin{array}{rr}
   0\,\,&-1 \\
  1\,\,\,& 0 \\
\end{array} \right)$

 since $\tr A+2=2$ that is non-square residue in ${{\mathbb{F}}_{3}}$ but $\tr A-2=-(-1)=1$ in ${{\mathbb{F}}_{3}}$, then  according to our formula we use branch of expression with minus in $\tr A \pm 2$ i.e.
$\sqrt{A}=\frac{A-E}{\sqrt[{}]{\tr A-2}}=\frac{1}{\sqrt[{}]{\tr A-2}}\left( \begin{array}{rr}
   -0-1,\,\,\,\,\, & -1 \\
  \,1,\,\,\,\,\,\,\,& -0-1
\end{array}
\right)=\frac{1}{\sqrt[{}]{\tr A-2}}\left( \begin{array}{rr}
   -1,\,\,\,\,\,&-1 \\
 \,1,\,\,\,\,\,&-1 \\
\end{array} \right)=\left( \begin{array}{rr}
   -1,\,\,\,\,\,&-1 \\
  \,1,\,\,\,\,\,&-1 \\
\end{array} \right)=\left( \begin{array}{rr}
   \,2,\,\,\,& -1 \\
  \,1,\,\,\,\,\,&2 \\
\end{array} \right) = B.$  Another branch with "-" before the root $\tr A \pm 2$ lead us to second root:
$\sqrt{A}= \frac{A-E}{- ~ \sqrt[{}]{\tr A-2}}=\frac{1}{\sqrt[{}]{\tr A-2}}\left( \begin{array}{rr}
   -0+1,\,\,\,\,\, & 1 \\
\end{array}
\right)=\frac{1}{\sqrt[{}]{\tr A-2}}\left( \begin{array}{rr}
    1,\,\,\,\,\,&1 \\
 \,-1,\,\,\,\,\,&1 \\
\end{array} \right)= -B.$

Its ${{\chi }_{A}}(x)={{x}^{2}}+1=0$ therefore its roots are $\pm i\in {{\mathbb{F}}_{9}}\backslash {{\mathbb{F}}_{3}}$ and $\pm i$ are square in ${{\mathbb{F}}_{9}}$ that confirms our criterion \ref{CriterionPSL}.
\end{exm}

\begin{cor} Let $A \in SL_2(\mathbb{F})$, where $\mathbb{F}$ is arbitrary field.
Then all solutions of equation $X^2=A$ contain in $ESL_2(\mathbb{F})$.
\end{cor}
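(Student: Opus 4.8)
The plan is to exploit the multiplicativity of the determinant, since $ESL_2(\mathbb{F})$ is by its definition precisely the set of matrices in $GL_2(\mathbb{F})$ whose determinant equals $\pm 1$. Thus it suffices to show that every matrix $X \in M_2(\mathbb{F})$ satisfying $X^2 = A$ is invertible and has $\det(X) \in \{+1, -1\}$; the conclusion $X \in ESL_2(\mathbb{F})$ then follows at once.

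First I would apply $\det$ to both sides of $X^2 = A$. Multiplicativity gives $\det(X)^2 = \det(X^2) = \det(A)$, and since $A \in SL_2(\mathbb{F})$ we have $\det(A) = 1$, whence $\det(X)^2 = 1$. Working in a field, I would then factor $\det(X)^2 - 1 = (\det(X) - 1)(\det(X) + 1) = 0$; because a field has no zero divisors, this forces $\det(X) = 1$ or $\det(X) = -1$. In particular $\det(X) \neq 0$, so $X$ is nonsingular and hence $X \in GL_2(\mathbb{F})$.

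Having established $X \in GL_2(\mathbb{F})$ with $\det(X) = \pm 1$, I would conclude directly from the definition of $ESL_2(\mathbb{F})$ that $X \in ESL_2(\mathbb{F})$, which proves the corollary. I do not expect a genuine obstacle here: the statement is an immediate consequence of the determinant being a group homomorphism. The only point requiring a word of care is that the argument must be characteristic-free, so that it genuinely holds over an \emph{arbitrary} field as claimed: in characteristic $2$ the two roots $+1$ and $-1$ coincide, but the conclusion $\det(X) = \pm 1$ and the invertibility of $X$ both survive, so the membership $X \in ESL_2(\mathbb{F})$ holds in every case.
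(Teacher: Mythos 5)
Your proposal is correct and follows essentially the same route as the paper's own proof: apply the determinant homomorphism to $X^2=A$, deduce $\det(X)^2=1$, hence $\det(X)=\pm 1$ and $X\in ESL_2(\mathbb{F})$. Your added remarks on invertibility and the characteristic-2 case are fine but do not change the argument.
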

\begin{proof} Let $B$ is solution of $X^2=A$.
Since $1=\det A= \det B \det B$ then $\det B=\pm1$. Thus, $B \in ESL_2(\mathbb{F})$.
\end{proof}

For case $\mathbb{F}=\mathbb{F}_p$ our criterion can be formulated in terms of Legendre symbol.
\begin{cor}\label{CriterionSL(2,p)}
Let $A$ be simple matrix and
$A \in {SL_2( \mathbb{F}_p)}$  \cite{LinearShpringer}, then
for matrix $A \in SL_2(\mathbb{F}_p)$ there is a solution  $B \in SL_2(\mathbb{F}_p)$ of the matrix equation
 \begin{equation}\label{Eq1}
X^2 = A
\end{equation}
if and only if

\begin{equation}\label{tr1}
\left(\frac{\tr A+2}{p}\right) \in \{0,1\}.
\end{equation}

If  $X \in ESL_2(\mathbb{F}_p)$ then the matrix equation  \eqref{Eq1} has a solution iff

\begin{equation}\label{tr2}
\left(\frac{\tr A \pm 2}{p}\right) \in \{0,1\}.
\end{equation}
This solution $X \in ESL_2(\mathbb{F}_p) \setminus SL_2(\mathbb{F}_p)$ iff $\left(\frac{tr A - 2}{p}\right)=1$ or 0, but $\left(\frac{\tr A + 2}{p}\right)=-1$. Conversely $X \in SL_2(\mathbb{F}_p)$ iff $\left(\frac{\tr A + 2}{p}\right)=1$.
Solutions $X_{i} \in ESL_2(\mathbb{F})$ and $SL_2(\mathbb{F})$ iff $\left(\frac{\tr A + 2}{p}\right)=1$ and $\left({\tr A - 2}{}\right)=1$.

In the case  $A \in GL_2 ({{\mathbb{F}}_{p}})$ this condition \eqref{tr} takes form:
 \begin{equation} \label{trGL(p)}
\left(\frac{\tr A \pm 2\sqrt{detA}}{p} \right) \in \{0,1\}.
 \end{equation}
\end{cor}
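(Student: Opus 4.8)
The plan is to deduce this corollary directly from Theorem \ref{CriterionPSL} specialized to $\mathbb{F} = \mathbb{F}_p$, the only substantive work being a translation of the phrase ``quadratic element or $0$'' into the language of the Legendre symbol. First I would recall the standard convention that for $x \in \mathbb{F}_p$ one has $\left(\frac{x}{p}\right) = 0$ exactly when $x \equiv 0$, $\left(\frac{x}{p}\right) = 1$ exactly when $x$ is a nonzero square, and $\left(\frac{x}{p}\right) = -1$ otherwise. Under this convention the assertion ``$x$ is a quadratic element in $\mathbb{F}_p$ or $x = 0$'' is literally equivalent to ``$\left(\frac{x}{p}\right) \in \{0,1\}$'', which is precisely the dictionary needed to pass between the two formulations.

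Applying this dictionary clause by clause to Theorem \ref{CriterionPSL}: condition \eqref{tr} becomes \eqref{tr1}, the overgroup condition \eqref{trESL} becomes \eqref{tr2}, and the coset-splitting statement translates verbatim. Concretely, a root lies in $ESL_2(\mathbb{F}_p) \setminus SL_2(\mathbb{F}_p)$ when $\tr A - 2$ is a square or $0$ while $\tr A + 2$ is a non-residue, a root lies in $SL_2(\mathbb{F}_p)$ when $\tr A + 2$ is a square, and roots occur in both cosets when both $\tr A + 2$ and $\tr A - 2$ are squares; each of these reads off immediately as the corresponding $\{0,1\}$-versus-$(-1)$ assertion. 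Since every implication of Theorem \ref{CriterionPSL} was established over an \emph{arbitrary} field and in particular over $\mathbb{F}_p$, no argument beyond this substitution is required for the $SL_2$ and $ESL_2$ parts.

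The one point that calls for a little care, and which I regard as the main (minor) obstacle, is the $GL_2(\mathbb{F}_p)$ clause \eqref{trGL(p)}, where $\sqrt{\det A}$ appears inside the Legendre symbol. Here I would note that $\sqrt{\det A}$ is an element of $\mathbb{F}_p$ precisely when $\det A$ is a quadratic residue, which is exactly the supplementary hypothesis already built into the $GL_2$ part of Theorem \ref{CriterionPSL}; the two admissible choices of this square root are what the symbol $\pm$ records, so both branches $\tr A + 2\sqrt{\det A}$ and $\tr A - 2\sqrt{\det A}$ are well-defined elements of $\mathbb{F}_p$ whose Legendre symbols are meaningful. With this interpretation fixed, condition \eqref{trGL} passes to \eqref{trGL(p)} by the same dictionary, which completes the reduction and hence the proof.
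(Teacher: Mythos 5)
Your reduction is correct and matches the paper's own treatment: the paper disposes of this corollary in a single line (``The proof is the same but instead of $\mathbb{F}$ we put $\mathbb{F}_p$''), which is exactly the specialization of Theorem \ref{CriterionPSL} to $\mathbb{F}=\mathbb{F}_p$ that you carry out. Your added care --- the explicit Legendre-symbol dictionary for ``quadratic element or $0$'' and the observation that $\sqrt{\det A}\in\mathbb{F}_p$ is guaranteed by the quadraticity hypothesis on $\det A$ in the $GL_2$ clause --- only makes explicit what the paper leaves implicit.
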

The proof is the same but instead of $\mathbb{F}$ we put $\mathbb{F}_p$. But we emphasize that theorems of such a kind were for algebraic closed field before this paper.

\begin{cor}
If $A\in GL(F_2)$ the condition \ref{tr} takes the form:
\[\left(\frac{\tr A}{p}\right) \in \{0,1\}. \]
\end{cor}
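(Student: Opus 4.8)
The plan is to derive this corollary purely as the specialization of the general finite-field criterion \eqref{trGL(p)} of Corollary \ref{CriterionSL(2,p)} to the smallest field, exploiting that $\mathbb{F}_2$ has characteristic $2$. I would begin from the statement there: a simple or scalar matrix $A \in GL_2(\mathbb{F}_p)$ admits a square root iff $\left(\frac{\tr A \pm 2\sqrt{\det A}}{p}\right) \in \{0,1\}$ together with $\det A$ being a quadratic element. Substituting $p = 2$, the first observation is that $2 = 0$ in $\mathbb{F}_2$, so the summand $2\sqrt{\det A}$ vanishes identically; the two branches $\pm$ merge and the argument of the symbol collapses to $\tr A$ alone.

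The next step is to discharge the side condition on $\det A$. For $A \in GL_2(\mathbb{F}_2)$ one has $\det A \neq 0$, and since $\mathbb{F}_2 = \{0,1\}$ this forces $\det A = 1 = 1^2$, so $\det A$ is automatically a quadratic element and $\sqrt{\det A} = 1$. Hence both requirements of \eqref{trGL(p)} reduce to the single condition $\left(\frac{\tr A}{2}\right) \in \{0,1\}$, which is exactly the asserted form. I would moreover note, for context, that every element of $\mathbb{F}_2$ is a square (indeed $0^2 = 0$ and $1^2 = 1$), so this condition holds unconditionally; the content is therefore that each \emph{simple or scalar} matrix in $GL_2(\mathbb{F}_2)$ has a square root. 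This is consistent with the concrete picture: the simple/scalar elements of $GL_2(\mathbb{F}_2) \cong S_3$ are exactly $E$ (of trace $0$) and the two order-$3$ matrices (of trace $1$), and all three are squares.

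The point requiring care is not a computation but the scope of the statement: the corollary inherits the standing hypothesis of Theorem \ref{CriterionPSL} that $A$ is simple or scalar, and this restriction is genuinely needed over $\mathbb{F}_2$. The non-semisimple elements of $GL_2(\mathbb{F}_2)$, namely the transpositions of $S_3$ conjugate to the Jordan block $\left(\begin{array}{rr} 1 & 1 \\ 0 & 1 \end{array}\right)$, have trace $0$ yet admit no square root in the group. Thus the trivially satisfied trace condition must not be read as asserting roots for all of $GL_2(\mathbb{F}_2)$; outside the simple/scalar case the criterion simply does not apply. With this caveat in place, the argument is exactly the mechanical substitution $p = 2$ combined with the characteristic-$2$ identity $2\sqrt{\det A} = 0$, and no further obstacle arises.
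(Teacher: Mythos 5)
Your proof is correct and is exactly the derivation the paper intends: the paper states this corollary without any accompanying proof, as an immediate specialization of condition \eqref{trGL(p)}, and your mechanical substitution $p=2$ — using $2\sqrt{\det A}=0$ in characteristic $2$ and $\det A=1$ for any $A\in GL_2(\mathbb{F}_2)$ — is precisely that step. Your closing caveat, that the (trivially satisfied) trace condition applies only to simple or scalar matrices since the Jordan-block elements of $GL_2(\mathbb{F}_2)\cong S_3$ have trace $0$ yet admit no square root, is a worthwhile addition and agrees with the paper's later proposition that a matrix not admitting diagonal form over $\mathbb{F}_2$ is not a square in $GL_2(\mathbb{F}_2)$.
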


\begin{rem}
    The formulated criterion for a diagonizable matrix  is also true over fields $\mathbb{Q}$ and $\mathbb{R}$.
\end{rem}
\begin{proof}
The proof is the same only with the change of quadraticity criterion over the new field.
\end{proof}

\begin{cor} If matrix $A$ admits  diagonal form over ${\F}_2$ then $A$ is square over ${\F}_{2}$.
\end{cor}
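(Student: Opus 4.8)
The plan is to combine the diagonalizability hypothesis with the one special feature of characteristic two, namely that the Frobenius endomorphism $x\mapsto x^{2}$ is surjective on $\mathbb{F}_{2}$ (indeed it is the identity map there, since $0^{2}=0$ and $1^{2}=1$). Consequently \emph{every} element of $\mathbb{F}_{2}$ is a square, so the quadraticity condition appearing in Theorem \ref{CriterionPSL} and its corollaries is automatically satisfied over $\mathbb{F}_{2}$; this is the conceptual reason the statement holds.

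First I would use the assumption that $A$ admits a diagonal form over $\mathbb{F}_{2}$ to write $A=PDP^{-1}$, where $P\in GL_2(\mathbb{F}_2)$ and $D=\mathrm{diag}(d_{1},d_{2})$ with $d_{1},d_{2}\in\mathbb{F}_{2}$. The point to record here is that the conjugating matrix $P$ and all entries of $D$ genuinely lie in the base field $\mathbb{F}_{2}$, precisely because the diagonalization is assumed to take place over $\mathbb{F}_{2}$. Since each diagonal entry $d_i$ belongs to $\{0,1\}$, there is $e_i\in\mathbb{F}_{2}$ with $e_i^{2}=d_i$ (in fact $e_i=d_i$). Setting $E=\mathrm{diag}(e_{1},e_{2})$ and $B=PEP^{-1}$, I would then verify directly that $B^{2}=PE^{2}P^{-1}=PDP^{-1}=A$, so $B$ is a square root of $A$ over $\mathbb{F}_{2}$, as required.

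There is essentially no genuine obstacle here: the whole content is the surjectivity of squaring on $\mathbb{F}_{2}$ together with the fact that conjugation commutes with taking powers. I would additionally remark that, because $d_i^{2}=d_i$ forces $D^{2}=D$ and hence $A^{2}=A$, one may even take $B=A$ itself, so every matrix diagonalizable over $\mathbb{F}_{2}$ is idempotent and therefore its own square root. The only point worth stating explicitly is that $B$ is a matrix over $\mathbb{F}_{2}$ and not merely over an extension, which follows at once from $E$ and $P$ being over $\mathbb{F}_{2}$; this is exactly why the analogous claim is nontrivial for a general field, where a root $\sqrt{d_i}$ of a diagonal entry may leave the base field.
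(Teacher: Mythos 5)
Your proposal is correct and takes essentially the same approach as the paper: both arguments rest on the fact that every element of a finite field of characteristic $2$ is a square (Frobenius is surjective), so taking entrywise square roots of the diagonal form and conjugating back yields a square root of $A$ over $\mathbb{F}_2$. Your added observation that over $\mathbb{F}_2$ one has $d_i^2=d_i$, hence $A^2=A$ and $A$ is its own square root, is a nice sharpening not stated in the paper, but the underlying argument is the same.
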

\begin{proof}
Since in $F_{2^n}$ all elements $g_i \in F_{2^n}$ are quadratic elements, therefore a diagonal matrix $A$ is always square of the mentioned above $B$ over $F_{2^n}$.
\end{proof}

We revise the formula of square root for its generalization and also because of a limiting case of zero in the denominator was not researched fully. In fact, a root admitting Jordan block of dimension 2 was not found in \cite{DON}.
\begin{prop}\label{formula}
If a simple matrix  $A\in S{{L}_{2}}({\mathbb{F}_{p}})$ and  $\left( \frac{\tr (A)+2}{p} \right)=1$, then

$$\sqrt{A}=\frac{ 1}{\pm \sqrt{ \tr A \pm2}}\left( A \pm E \right),$$
where $E$ is identity element of $S{{L}_{2}}({\mathbb{F}_{p}})$, in case of sign '-' in $\left( A \pm E \right)$  roots  $\sqrt{A} \in ESL_{2}({\mathbb{F}_{p}})$.  Namely for $A= \left( \begin{array}{cc}
    a & b \\
     c & d \\
  \end{array} \right)$ in a coordinate form in case $\sqrt{A} \in SL_2({\mathbb{F}_{p}})$ we have
$
 {\sqrt{A}}= \frac{1}{\pm \sqrt{\tr (A)+2}} \left(
  \begin{array}{cc}
    a+1 & b \\
     c & d+1 \\
  \end{array}
\right)$.


\end{prop}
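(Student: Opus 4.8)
The plan is to verify the closed form directly: I would square the proposed matrix, use the Cayley--Hamilton relation to collapse $A^2$, and check that the result equals $A$; afterwards I would compute the determinant of the root to decide which coset of $ESL_2(\mathbb{F}_p)$ it occupies. Since this is an explicit formula, no existence argument is needed beyond a verification, so the work is essentially algebraic simplification backed by the spectral facts already established in Theorem \ref{CriterionPSL}.

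First I would record that, because $A\in SL_2(\mathbb{F}_p)$, we have $\det A=1$, so Cayley--Hamilton gives $A^2=(\tr A)A-E$. Put $\gamma:=\pm\sqrt{\tr A\pm 2}$, which lies in $\mathbb{F}_p$ and is nonzero exactly by the quadratic-residue hypothesis, and set $B:=\tfrac{1}{\gamma}(A\pm E)$. Substituting the Cayley--Hamilton relation into the expansion of $(A\pm E)^2$ yields
\[
(A\pm E)^2=A^2\pm 2A+E=(\tr A)A-E\pm 2A+E=(\tr A\pm 2)A.
\]
Hence $B^2=\tfrac{1}{\gamma^2}(\tr A\pm 2)A=A$, using $\gamma^2=\tr A\pm 2$. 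This establishes that the stated expression is indeed a square root of $A$; the outer sign in $\tfrac{1}{\pm\sqrt{\,\cdot\,}}$ merely encodes that both $B$ and $-B$ are roots.

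Next I would pin down the coset. For a $2\times 2$ matrix one has $\det(A\pm E)=\det A\pm\tr A+1=2\pm\tr A$, so $\det B=\tfrac{2\pm\tr A}{\tr A\pm 2}$. In the $+$ branch this is $1$, placing $B\in SL_2(\mathbb{F}_p)$; in the $-$ branch it equals $-1$, so $B\in ESL_2(\mathbb{F}_p)\setminus SL_2(\mathbb{F}_p)$, matching the coset placement asserted in Theorem \ref{CriterionPSL}. The coordinate form is then the entrywise reading of $\tfrac{1}{\gamma}(A+E)$, namely
\[
\sqrt{A}=\frac{1}{\pm\sqrt{\tr(A)+2}}\begin{pmatrix} a+1 & b \\ c & d+1\end{pmatrix},
\]
which is the same computation written out.

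The hard part is not the algebra but guarding against a vanishing denominator, i.e. $\tr A\pm 2=0$. For the $+$ branch the hypothesis $\left(\frac{\tr A+2}{p}\right)=1$ forces $\tr A+2$ to be a nonzero square, so $\gamma\neq 0$ and the formula is well posed; the excluded value $\tr A+2=0$ corresponds to the scalar case $A=-E$ (the exceptional Jordan situation) already handled separately in the proof of Theorem \ref{CriterionPSL}. Thus the simplicity together with the quadratic-residue condition is precisely what rules out the singular denominator, and the verification above then completes the argument.
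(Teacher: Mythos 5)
Your proposal is correct and follows essentially the same route as the paper's proof: both rest on the Cayley--Hamilton identity $A^2=(\tr A)A-E$, completing the square to get $(A\pm E)^2=(\tr A\pm 2)A$, and dividing by the square root of $\tr A\pm 2$. Your additions --- the explicit determinant computation $\det(A\pm E)=2\pm\tr A$ to settle the coset, and the observation that the residue hypothesis rules out a vanishing denominator --- are worthwhile verifications that the paper leaves implicit (deferring coset placement to Theorem~\ref{CriterionPSL}), but they do not change the underlying argument.
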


\begin{proof}

Consider the characteristic equation for $A$:  ${{x}^{2}}-\tr(A)x+\det (A) E=0$.  According to Cayley Hamilton theorem we have
\begin{align*}
  & {{A}^{2}}-\tr (A)A+E=0,  \\
  & {{A}^{2}}+E=\tr (A)A.
\end{align*}
  We add $2A$ to the both sides of this equation
\begin{align*}
  & {{A}^{2}}+2A+E=-\tr (A)A+2A, \\
  & {{\left( A+E \right)}^{2}}=A\left( \tr A+2 \right).
\end{align*}
And finally  we express the root:
\begin{equation}\label{sq}
 \sqrt{A}=\frac{1}{\pm \sqrt{\tr A+2}}\left( A+E \right).
\end{equation}


In a coordinate it takes the form:
$
 {\sqrt{A}}= \frac{1}{\pm \sqrt{\tr A+2}} \left(
  \begin{array}{cc}
    a+1 & b \\
     c & d+1 \\
  \end{array}
\right).$

The similar proof for the case ${\tr A-2}$ is square give us roots in $ ESL_2(\mathbb{\F_p})$.

\begin{equation}\label{esq}
 \sqrt{A}=\frac{1}{\pm \sqrt{\tr A-2}}\left( A - E \right).
\end{equation}

In a coordinate form it presents by matrix:

$
 {\sqrt{A}}= \frac{1}{\pm \sqrt{\tr A-2}} \left(
  \begin{array}{cc}
    a-1 & b \\
     c & d-1 \\
  \end{array}
\right).$
  \end{proof}

The similar proof yields the formula of roots in $ ESL_2(\mathbb{Z})$ and in $ ESL_2({k})$, where $k$ is arbitrary perfect field.

\begin{exm}
For instance, $tr (E)=2$ and as a result $
 {\sqrt{E}}=  \frac{1}{\pm \sqrt{2+2}}\left(
  \begin{array}{cc}
    2 & 0 \\
     0 & 2 \\
  \end{array}
\right) = \pm E.
$
\end{exm}

Note the in analytical formula of root in $GL_2 (\mathbb{R})$ founded in \cite{DON} the case of finite field $\mathbb{F}_p$ was not considered. Furthermore, case when $ \tr(A) + \epsilon_1 2 \sqrt{det(A)} E= 0$ was not provided not in their formula nor in their work \cite{Kunyav2}.
We also take this case into account in our investigations.

\begin{cor}\label{formulaGL}
 Generalizing the formula of root on $G{{L}_{2}}({\mathbb{F}_{p}})$ we get a new formula.
If a simple matrix  $A\in G{{L}_{2}}({\mathbb{F}_{p}})$ and $\left( \tr(A) \pm 2A \sqrt{det (A)}  \right)$ is quadratic residue or 0 in ${\mathbb{F}_{p}}$, then $\sqrt{A}\in G{{L}_{2}}({\mathbb{F}_{p}})$
$$\sqrt{A}=\frac{ \pm 1}{ \sqrt{\tr A \pm 2 \sqrt{det A}}}\left( A \pm E \sqrt{det A} \right),$$
where  sign $'\pm'$ in $\left( A \pm E \sqrt{det A} \right)$  coincides with  sign $'\pm'$  in denominator $\sqrt{\tr A \pm 2 \sqrt{det A}}$, whereas sign before 1 in the nominator is independent.
\end{cor}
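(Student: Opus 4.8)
The plan is to repeat the Cayley--Hamilton completion-of-square argument of Proposition \ref{formula}, but now without normalizing the determinant to $1$. I would set $d=\sqrt{\det A}$; this is well defined in $\mathbb{F}_p$ precisely because the hypothesis already presupposes $\det A$ to be a quadratic element (the symbol $\sqrt{\det A}$ occurs inside the condition on $\tr A\pm 2\sqrt{\det A}$). The characteristic equation of $A$ together with the Cayley--Hamilton theorem gives $A^{2}-\tr(A)A+\det(A)E=0$, that is $A^{2}+d^{2}E=\tr(A)A$.

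First I would add, respectively subtract, the term $2dA$ to both sides to complete the square on the left:
\begin{align*}
(A+dE)^{2}&=(\tr A+2d)\,A, \\
(A-dE)^{2}&=(\tr A-2d)\,A.
\end{align*}
Under the hypothesis that $\tr A\pm 2\sqrt{\det A}$ is a quadratic residue, I write $\tr A\pm 2d=s^{2}$ with $s\in\mathbb{F}_p$. Dividing the corresponding identity by $s^{2}$ yields at once $B:=\frac{1}{\pm s}\left(A\pm dE\right)$ with $B^{2}=A$ by direct substitution, which is exactly the asserted formula $\sqrt{A}=\frac{\pm 1}{\sqrt{\tr A\pm 2\sqrt{\det A}}}\left(A\pm E\sqrt{\det A}\right)$. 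The sign inside $A\pm dE$ is tied to whether we added or subtracted $2dA$, hence it must agree with the sign under the root in the denominator; the sign in front of $1/s$ is free, since both $s$ and $-s$ square to $\tr A\pm 2d$, and this accounts for the two roots $\pm B$. Note this is an exact matrix identity, not merely a similarity, so no extra conjugation argument is needed.

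Next I would check that $B$ lies in $GL_2(\mathbb{F}_p)$. Its entries are in $\mathbb{F}_p$ because $d,s\in\mathbb{F}_p$. Writing the eigenvalues of $A$ as $\lambda_1,\lambda_2$, a direct computation gives $\det(A\pm dE)=\det A\pm d\,\tr A+d^{2}=\pm d(\tr A\pm 2d)=\pm d\,s^{2}$, whence $\det B=\det(A\pm dE)/s^{2}=\pm d=\pm\sqrt{\det A}\neq 0$, consistent with $\det(B)^{2}=\det A$. Thus $B$ is invertible and belongs to $GL_2(\mathbb{F}_p)$.

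The only points requiring care are the implicit requirement that $\det A$ be a square, so that $d\in\mathbb{F}_p$, and the degenerate branch covered by the phrase ``or $0$'', when $\tr A\pm 2\sqrt{\det A}=0$ and the denominator vanishes. Here the completion-of-square identity degenerates to $(A\pm dE)^{2}=0$; since $A$ is simple its eigenvalues $\lambda_i\pm d$ cannot all be zero unless $A$ is scalar, so for genuinely simple $A$ this branch never occurs, and for the scalar case one falls back on the explicit non-diagonal root structures already exhibited in Theorem \ref{CriterionPSL}. The generic computation itself is a one-line verification once the square has been completed; keeping the two sign conventions coherent across the $+$ and $-$ branches is the main bookkeeping task.
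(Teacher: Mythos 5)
Your proof follows essentially the same route as the paper's: Cayley--Hamilton plus completing the square by adding $\pm 2A\sqrt{\det A}$, yielding $\left( A \pm E\sqrt{\det A} \right)^{2}=A\left( \tr A \pm 2\sqrt{\det A} \right)$, with the vanishing-denominator case $\tr A \pm 2\sqrt{\det A}=0$ referred back to the scalar-matrix analysis in Theorem \ref{CriterionPSL}. Your write-up is in fact tidier than the paper's: it corrects the paper's sign and transcription slips (the paper writes $A^{2}+E=\tr(A)A$ instead of $A^{2}+\det(A)E=\tr(A)A$), adds the explicit verification $\det B=\pm\sqrt{\det A}\neq 0$ so that $B\in GL_2(\mathbb{F}_p)$, and observes that the degenerate branch cannot occur for a genuinely simple matrix.
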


\begin{proof}
Consider the characteristic equation for $A \in GL(\mathbb{F})$:  ${{x}^{2}}-\tr(A)x+\det (A) E=0$.  Then using Cayley Hamilton theorem we derive
\begin{align*}
  & {{A}^{2}}-\tr(A)A+det(A) E=0,  \\
  & {{A}^{2}}+E=\tr(A)A.
\end{align*}
  We add $\pm 2A\sqrt{detA}$ to form a complete square in left side of equation
\begin{align*}
  & {{A}^{2}}+2A(\pm \sqrt{detA})+E=-\tr(A)A+2A\sqrt{detA}, \\
  & {{\left( A \pm E{detA} \right)}^{2}}=A\left( \tr A+2\sqrt{detA} \right).
\end{align*}
This lead us to solution in similar way described above. But the exceptional limiting case $c=\tr A-2\sqrt[{}]{\det A}= 0$ was not founded in \cite{DON} so we describe it in the prove of Theorem \ref{CriterionPSL}, where we investigate two possible subcases in this situation $0= \tr A-2\sqrt[{}]{\det A} = 2\lambda -2\sqrt{{{\lambda }^{2}}} = 2\lambda \pm 2\lambda$ where in particular the solutions $B_1, B_2 \in ESL_2 (\mathbb{F})$ appear. Also roots of third and fourth power were not founded in \cite{DON, Nort, Arslan}. The investigation \cite{Nort} claims that there are class of matrices in $S{{L}_{2}}\left( {{\text{F}}_{2}} \right)$ having not square root but we make group classification of roots distribution in which root always exists in splotable group extension of $S{{L}_{2}}\left( {{\text{F}}_{p}} \right)$ by the same field viz it is in $ES{{L}_{2}}\left( {{\text{F}}_{p}} \right)$.
\end{proof}

\begin{cor}
    The formula of 4-th power root is the following
    $$\sqrt[4]{A}=\frac{A \pm E \pm\sqrt{\tr A \pm 2}}{\pm \sqrt{ \pm \sqrt{\tr A \pm 2} \pm 2}}.$$
\end{cor}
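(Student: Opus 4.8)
The plan is to exploit the identity $\sqrt[4]{A}=\sqrt{\sqrt{A}}$ and to apply the square-root formula of Proposition \ref{formula} twice in succession. First I would set $B:=\sqrt{A}$ and, assuming $\tr A\pm 2$ is a quadratic element or $0$, write $B=\frac{1}{\pm\sqrt{\tr A\pm 2}}\left(A\pm E\right)$ by Proposition \ref{formula}. Applying the same formula to $B$ then gives $\sqrt[4]{A}=\sqrt{B}=\frac{1}{\pm\sqrt{\tr B\pm 2}}\left(B\pm E\right)$, so the whole task reduces to expressing $\tr B$ through $\tr A$ and substituting.

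The key intermediate computation is the value of $\tr B$. Here I would reuse the eigenvalue bookkeeping from the proof of Theorem \ref{CriterionPSL}: if $\mu_1,\mu_2$ are the eigenvalues of $B=\sqrt A$, then $\mu_1\mu_2=\pm 1$ and $\mu_1^2+\mu_2^2=\tr A$, whence $\tr B=\mu_1+\mu_2=\pm\sqrt{\tr A+2}$ in the $SL_2$ branch (and $\tr B=\pm\sqrt{\tr A-2}$ in the $ESL_2$ branch). Thus $\tr B\pm 2=\pm\sqrt{\tr A\pm 2}\pm 2$, which is exactly the nested radical appearing under the outer square root of the claimed formula. Substituting this expression for $\tr B$, together with the explicit $B$, into $\sqrt{B}=\frac{B\pm E}{\pm\sqrt{\tr B\pm 2}}$ and simplifying then yields a nested-radical formula for $\sqrt[4]{A}$ of the stated shape, in which the inner radical $\sqrt{\tr A\pm 2}$ feeds the outer one.

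The main obstacle will be the correct composition of the existence conditions and the propagation of the sign choices through the two nested applications. Unlike a single square root, the quadratic-residue hypothesis must hold at two levels: first $\tr A\pm 2$ must be a square (or $0$) so that $B\in ESL_2(\mathbb{F})$ exists, and then the shifted trace $\tr B\pm 2=\pm\sqrt{\tr A\pm 2}\pm 2$ must again be a square (or $0$) so that $\sqrt{B}$ exists. I would state this compound condition explicitly and track how each independent $\pm$ in the formula corresponds to a definite branch, namely which root of $A$ is taken inside and which root of that root is taken outside, exactly as the $\pm$ before the radical is independent of the $\pm$ inside $\left(A\pm E\right)$ in Proposition \ref{formula}.

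Finally, I would dispose of the limiting situations in which an inner or outer denominator vanishes, i.e.\ $\tr A\pm 2=0$ or $\pm\sqrt{\tr A\pm 2}\pm 2=0$, by invoking the explicit exceptional-case roots $B_1,\dots,B_4$ constructed in the proof of Theorem \ref{CriterionPSL} instead of the formula, and I would verify correctness directly by squaring twice, checking $(\sqrt[4]{A})^2=\sqrt{A}$ and hence $(\sqrt[4]{A})^4=A$.
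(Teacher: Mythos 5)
Your proposal is correct and follows essentially the same route as the paper: both treat $\sqrt[4]{A}=\sqrt{\sqrt{A}}$ and apply the square-root formula of Proposition \ref{formula} recursively, with the nested radical arising from $\tr\sqrt{A}=\pm\sqrt{\tr A\pm 2}$. The only cosmetic difference is that you obtain this trace identity via the eigenvalue relations $\mu_1\mu_2=\pm1$, $\mu_1^2+\mu_2^2=\tr A$, whereas the paper reads it off from the formula itself using $\tr(A+E)=\tr A+2$; your additional tracking of the compound quadraticity conditions and the vanishing-denominator cases is extra care the paper omits.
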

\begin{proof}
    We construct the formula to 4-th power root in recursive way where base of recursion in formula \eqref{sq}. Taking into account $tr(A+E)=\tr A+2$  we obtain for case $det(A)=det(\sqrt{A})=1$ that  $\sqrt[4]{A}=\frac{\sqrt{A}+E}{\pm\sqrt{tr\sqrt{A}+2}}=\frac{A+E \pm\sqrt{\tr A+2}}{\pm \sqrt{\sqrt{\tr A+2}+2}}.$
\end{proof}

\begin{rem}
    Extended special linear group $ESL_2(k)$, where $k$ is arbitrary perfect field, is storage of all square matrix roots from $SL_2(k)$.
\end{rem}

\begin{prop} If matrix $A$ do not admits diagonal form over ${\F}_{2}$ then $A$ is not square in $G{L_{2}}\left( {{\F}_{2}} \right)$ over ${\F_{2}}$.
\end{prop}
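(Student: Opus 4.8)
The plan is to exploit the fact that being a square is invariant under conjugation: if $A=U^{-1}A'U$ and $B^2=A'$, then $(U^{-1}BU)^2=A$, so it suffices to test one representative of each conjugacy class. Over $\mathbb{F}_2$ every invertible $2\times2$ matrix has $\det A=1$, so its characteristic polynomial is $x^2-\tr(A)x+1$; the only possibilities are $x^2+x+1$ (irreducible, the \emph{simple} case falling under Theorem \ref{CriterionPSL}) and $(x+1)^2=x^2-1$ (repeated eigenvalue $1$). A matrix of the latter type that does not admit diagonal form must have geometric multiplicity one, hence is non-semisimple and conjugate to the unipotent Jordan block $J=\begin{pmatrix}1&1\\0&1\end{pmatrix}$. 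Thus I would reduce the whole statement to the single claim that $J$ is not a square in $GL_2(\mathbb{F}_2)$, since every non-scalar unipotent matrix is a conjugate of $J$.

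For the core step, suppose for contradiction that $B^2=J$ for some $B\in GL_2(\mathbb{F}_2)$. Then $B$ commutes with $B^2=J$, so $B$ lies in the centralizer $C(J)$. A short computation of $C(J)$ shows it consists exactly of the upper unitriangular matrices $\begin{pmatrix}1&b\\0&1\end{pmatrix}$ with $b\in\mathbb{F}_2$, that is $C(J)=\{E,J\}$. Squaring both candidates gives $E^2=E$ and $J^2=\begin{pmatrix}1&2\\0&1\end{pmatrix}=E$ because $2=0$ in $\mathbb{F}_2$; hence $B^2=E$ in either case and never equals $J$, a contradiction. By the conjugation-invariance noted above, no non-scalar unipotent matrix is a square, which is the assertion. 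Equivalently one may simply enumerate the six elements of $GL_2(\mathbb{F}_2)\cong S_3$ and observe that the set of squares is $\{E\}$ together with the two elements of order three, none of which is conjugate to $J$.

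The main obstacle, and the reason this case escapes Theorem \ref{CriterionPSL}, is the characteristic-two degeneracy. Over $\mathbb{F}_2$ one has $2=0$, so the quantity $\tr A+2=\tr A$ is automatically a square (indeed every element of $\mathbb{F}_2$ is a square), and the trace test is therefore vacuous and cannot detect the failure of $J$ to be a square. The genuine obstruction is structural: since $(x+1)^2=x^2-1$ in characteristic two, every unipotent element is already an involution, so any potential unipotent square root $B$ satisfies $B^2=E$ rather than $B^2=J$. Making the centralizer step rigorous, namely verifying that $C(J)$ contains no matrix outside the unitriangular group and that reduction to canonical form introduces no spurious roots, is the only point requiring care, and it is dispatched by the finite argument above.
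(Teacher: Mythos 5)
Your proof is correct, and it takes a genuinely different route from the paper's. The paper argues on the Jordan block itself: it squares $\begin{pmatrix}\lambda & 1\\ 0 & \lambda\end{pmatrix}$, observes that the off-diagonal entry $2\lambda$ vanishes in characteristic two, so the square is a scalar matrix, and takes this as the contradiction with the non-diagonalizability of $A$. That is the same characteristic-two degeneracy you isolate (every unipotent matrix over $\F_2$ is an involution, $J^2=E$), but the paper's argument as written only examines candidate roots that are themselves upper triangular with a repeated eigenvalue; the step ruling out square roots that are diagonalizable, or that have irreducible characteristic polynomial, is left implicit. Your centralizer argument supplies exactly this missing reduction: $B^2=J$ forces $BJ=JB$, hence $B\in C(J)=\{E,J\}$, and both candidates square to $E$, so no a priori normal form for $B$ is required; the enumeration of $GL_2(\F_2)\cong S_3$ is an equally complete alternative. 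What the paper's route buys is that the obstructing entry $2\lambda$ is visible, which ties this proposition to Theorem \ref{CriterionPSLJordan}, where for $p>2$ the choice $\gamma=(2\beta)^{-1}$ does produce a root --- it exhibits characteristic $2$ as precisely the failure point; what your route buys is logical completeness.

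One caveat applies equally to your reading and to the paper's: taken literally, ``does not admit diagonal form over $\F_2$'' also covers matrices whose characteristic polynomial $x^2+x+1$ is irreducible over $\F_2$, and your own enumeration shows that these \emph{are} squares (the two order-three elements are exactly the nontrivial squares in $GL_2(\F_2)$). So the proposition is true only under the interpretation, adopted tacitly by both proofs, that $A$ has its repeated eigenvalue in $\F_2$ with geometric multiplicity one, i.e. $A$ is conjugate over $\F_2$ to the unipotent block $J$.
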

\begin{proof}
We consider equation of form ${{X}^{2}}=A$ and show that it has not solutions over ${{\F}_{2}}$ a $S{{L}_{2}}\left( {{\F}_{2}} \right)$ in case ${{\chi }_{A}}\left( x \right) \neq {{\mu }_{A}}\left( x \right)$. The conditions of theorem implies that geometrical dimension  of e.v. is  1 but algebraic multiplicity of e.v. $\lambda $ is 2. We make prof by the contradiction, assuming that is true then

${\left(\begin{array}{cc}
   \lambda \,\,& 1 \\
  0\,\, & \lambda  \\
\end{array} \right)}^{2}=\left(\begin{array}{cc}
   {{\lambda }^{2}}\,\, & 2\lambda  \\
  0 \,\, & {{\lambda }^{2}} \\
\end{array} \right)$  but
$\left( \begin{array}{cc}
   {{\lambda }^{2}}\,\,& 2\lambda \\
  0\,\, & {{\lambda}^{2}} \\
\end{array}\right)=
 \left(\begin{array}{cc}
   {{\lambda }^{2}}\,\,&0 \\
  0\,\,&{{\lambda }^{2}}\\
\end{array} \right)$ over ${{\text{F}}_{2}}$.
That contradicts to condition of this Theorem.
\end{proof}

Let $B$ has characteristic polynomial ${{x}^{2}}+bx+c$. It is well known that trace of $B$ is stable
under choosing of vector space base.

We denote Jordan form of matrix $A$ as ${{J}_{A}}$.

\begin{lem}\label{Trace} If a matrix $A\in S{{L}_{2}}({{F}_{p}})$ has multiple eigenvalues ${{\beta }_{1}}={{\beta }_{2}}=\beta $  and non-trivial Jourdan block of size $2\times 2$ then $\beta \in {{\F}_{p}}.$ 
\end{lem}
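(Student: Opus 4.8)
The plan is to leave the spectral side untouched and argue entirely from the two coefficients that the matrix forces into the base field. A priori $\beta$ is only known to lie in the algebraic closure $\overline{\mathbb{F}_p}$, being a root of the characteristic polynomial $\chi_A(x)=x^2-\tr(A)\,x+\det(A)$; the whole content of the lemma is that this particular root already descends to $\mathbb{F}_p$. First I would record that the trace and determinant are the elementary symmetric functions of the two eigenvalues and, since $A$ has entries in $\mathbb{F}_p$, both lie in $\mathbb{F}_p$. Because the eigenvalues are equal, $\beta_1=\beta_2=\beta$, the product of eigenvalues gives $\beta^2=\beta_1\beta_2=\det A$.

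The main step is then immediate: the hypothesis $A\in SL_2(\mathbb{F}_p)$ means $\det A=1$, so $\beta^2=1$, i.e. $\beta$ is a root of $x^2-1=(x-1)(x+1)$. This polynomial already splits into linear factors over the prime field, so its only roots in $\overline{\mathbb{F}_p}$ are $\pm 1$; hence $\beta\in\{1,-1\}\subseteq\mathbb{F}_p$, which is exactly the claim. The non-trivial $2\times 2$ Jordan block hypothesis is used only to ensure the statement has content, namely that $A$ is genuinely non-scalar (for a scalar $A$ the conclusion would be vacuous, $\beta$ being a diagonal entry of $A$); it plays no role in the computation, since all that is needed is that $\chi_A(x)=(x-\beta)^2$ has coefficients in $\mathbb{F}_p$, which in particular reads $\beta^2=\det A=1$ in the constant term.

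The only place that calls for care, and the step I would flag as the sole possible obstacle, is the characteristic-$2$ case. Over a field of characteristic different from $2$ one is tempted to argue straight from the trace, writing $\tr(A)=\beta_1+\beta_2=2\beta\in\mathbb{F}_p$ and hence $\beta=\tr(A)/2\in\mathbb{F}_p$; but this division fails for $p=2$, where $2$ is not invertible and $\tr(A)=2\beta=0$ carries no information at all. The determinant route $\beta^2=1\Rightarrow\beta\in\{1,-1\}$ uses no division and is therefore characteristic-free, covering every prime uniformly, including $p=2$, where it forces $\beta=1$ because $(x-1)^2=x^2-1$ there. For this reason I would present the determinant computation as the main line of proof and mention the trace identity only as a remark valid away from characteristic $2$.
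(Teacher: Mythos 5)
Your proof is correct, but it takes a genuinely different route from the paper. The paper argues via the trace: since $\beta_1=\beta_2=\beta$, one has $2\beta=\tr(A)\in\mathbb{F}_p$, and then $\beta=\tr(A)/2\in\mathbb{F}_p$ --- an argument the paper itself explicitly restricts to characteristic $\neq 2$ (and indeed, where the lemma is invoked later, the paper again inserts the caveat $p>2$). You instead use the determinant: $\beta^2=\beta_1\beta_2=\det A=1$, so $\beta$ is a root of $x^2-1=(x-1)(x+1)$ and hence $\beta\in\{1,-1\}\subseteq\mathbb{F}_p$. Your route buys two things the paper's does not: it is characteristic-free, so it covers $p=2$ (a case the paper elsewhere advertises as one of its selling points over \cite{Amit}, yet which its own trace argument here leaves open), and it yields the strictly stronger conclusion that $\beta=\pm 1$, not merely $\beta\in\mathbb{F}_p$. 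Note, however, that precisely because you pin $\beta$ down to $\pm 1$, your argument leans on $\det A=1$; the paper's trace argument, for what it is worth, would generalize verbatim to any matrix over $\mathbb{F}_p$ with repeated eigenvalue in odd characteristic, where your determinant step would instead give $\beta^2=\det A$ and then require a separate quadraticity discussion. Within the stated hypotheses ($A\in SL_2(\mathbb{F}_p)$), your proof is the better one. Your side remark that the Jordan-block hypothesis plays no role in the computation is also accurate --- it is equally unused in the paper's own proof.
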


Proof.  Since in this case eigenvalues are presented as elements of matrix $B$  standing on diagonal, then this matrix can be in form: $B=\left( \begin{array}{cc}
   \beta \,\,\,& 1 \\
  0\,\, & \beta  \\
\end{array} \right)$   or
$B=\left( \begin{array}{cc}
   \beta \,\,\,& 0 \\
 0\,\,\,  & \beta  \\
\end{array} \right).$

But the eigenvalues  of the matrix are multiples, therefore $\beta +\beta =tr(B)\in {{\F}_{p}}$. This implies $2\beta =b$, therefore in a field of characteristic  non equal 2  we express this eigenvalue as $\beta =\frac{b}{2}$. Hence $\beta \in {{\F}_{p}}$.  The proof is completed.

Our study of quadratic elements in $SL_2(\mathbb{F})$ gives an instrument to solve the problem
when a finite group G contains a conjugacy class $K$ whose square $K^2$ is again a conjugacy class \cite{Belt}.

\begin{thm}\label{CriterionPSLJordan}
 Under conditions $(\frac{\lambda }{p})=1$ in ${{\F}_{p}}$ and matrix $A$ is similar to a Jordan block of the form
\begin{equation}\label{Jblock}
 {{J}_{A}}= \left(
  \begin{array}{cc}
    \lambda & 1 \\
     0 & \lambda \\
  \end{array}
\right)
\end{equation}
a square root $B$ of $A$ exists in $S{{L}_{2}}({{\F}_{p}})$.
\end{thm}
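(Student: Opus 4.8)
The plan is to reduce the statement to the canonical block itself and then write down an explicit upper–triangular root. First I would record that square–rootability is a similarity invariant: if $A = U J_A U^{-1}$ and some $B_0$ satisfies $B_0^2 = J_A$, then $B := U B_0 U^{-1}$ satisfies $B^2 = U J_A U^{-1} = A$ while $\det B = \det B_0$, so a root of $J_A$ sitting in $S L_2(\F_p)$ transports to a root of $A$ in $S L_2(\F_p)$. Hence it suffices to produce a square root of the single Jordan block $J_A$ of \eqref{Jblock}.

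Next, since by hypothesis $\left(\frac{\lambda}{p}\right)=1$, there is $\mu \in \F_p$ with $\mu^2 = \lambda$ (Lemma \ref{Trace} already guarantees that the relevant eigenvalue lives in $\F_p$). Because a genuine, non-diagonal Jordan block forces odd characteristic — recall the Proposition above shows that over $\F_2$ such a matrix is never a square — the element $2\mu$ is invertible, so I may set $t := \frac{1}{2\mu}$ and define
\[
B = \left( \begin{array}{cc} \mu & t \\ 0 & \mu \end{array} \right).
\]
A one–line multiplication then gives
\[
B^2 = \left( \begin{array}{cc} \mu^2 & 2\mu t \\ 0 & \mu^2 \end{array} \right) = \left( \begin{array}{cc} \lambda & 1 \\ 0 & \lambda \end{array} \right) = J_A,
\]
which is exactly the solution $S_1$ already displayed inside the proof of Theorem \ref{CriterionPSL}.

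It then remains to pin down membership in $S L_2(\F_p)$, and this is the real content. Here $\det B = \mu^2 = \lambda$, and since $A \in S L_2(\F_p)$ its block obeys $\det J_A = \lambda^2 = 1$, so $\lambda = \pm 1$; in the branch $\lambda = 1$ we get $\mu = \pm 1$ and $\det B = 1$, so $B \in S L_2(\F_p)$, whereas the branch $\lambda = -1$ forces $\det B = -1$ and the root falls into the overgroup, i.e. it is the $G_1 \in E S L_2(\F_p)\setminus S L_2(\F_p)$ recorded earlier. To show that no differently shaped root can repair the determinant, I would note that any $B$ with $B^2 = A$ commutes with $A$, and that the centralizer of a non-derogatory $2\times 2$ matrix consists precisely of the polynomials $\alpha E + \beta N$ in $A$ (with $N$ the nilpotent off-diagonal part), so every root is upper triangular of the above form and $\det B = \lambda$ is unavoidable. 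Thus the main obstacle is not the existence computation, which is immediate once $\sqrt{\lambda}$ is available, but controlling the determinant so that the root genuinely lands in $S L_2(\F_p)$; this is exactly where the quadratic–residue hypothesis on $\lambda$ together with the odd–characteristic restriction do the work, simultaneously furnishing $\sqrt{\lambda} \in \F_p$ and making $\frac{1}{2\mu}$ well defined.
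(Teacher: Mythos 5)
Your construction coincides with the paper's own proof: reduce by similarity to the block $J_A$, take the upper--triangular root
$B_0=\begin{pmatrix} \mu & (2\mu)^{-1} \\ 0 & \mu \end{pmatrix}$ with $\mu^2=\lambda$ (the paper's $\beta=\sqrt{\lambda}$, $\gamma=(2\beta)^{-1}$), and transport back by conjugation. Your centralizer remark --- every root of $A$ commutes with $A$, and the centralizer of a non-derogatory $2\times 2$ matrix is $\F_p[A]$, so every root has this triangular shape and determinant exactly $\lambda$ --- is a clean addition the paper does not make.

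The genuine problem is your closing paragraph, which contradicts what you just proved. You correctly compute that \emph{every} root has $\det B=\mu^2=\lambda$, and that $A\in SL_2(\F_p)$ forces $\lambda=\pm 1$, with the root in $SL_2(\F_p)$ when $\lambda=1$ but in $ESL_2(\F_p)\setminus SL_2(\F_p)$ when $\lambda=-1$. You then assert that the quadratic--residue hypothesis ``controls the determinant so that the root genuinely lands in $SL_2(\F_p)$.'' It does not: $\left(\frac{-1}{p}\right)=1$ whenever $p\equiv 1 \pmod 4$, so $\lambda=-1$ satisfies the theorem's hypothesis, yet by your own (correct) argument every square root then has determinant $-1$. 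Concretely, for $p=5$ the matrix $J_A=\begin{pmatrix} 4 & 1 \\ 0 & 4 \end{pmatrix}\in SL_2(\F_5)$ has $\left(\frac{4}{5}\right)=1$, and its only square roots are $\pm\begin{pmatrix} 2 & 4 \\ 0 & 2 \end{pmatrix}$, of determinant $-1$. So what you have actually established is that the stated conclusion holds exactly in the branch $\lambda=1$ and fails in the branch $\lambda=-1$, $p\equiv 1\pmod 4$; the statement is only salvageable by restricting to $\lambda=1$ or by replacing $SL_2(\F_p)$ with $ESL_2(\F_p)$ in the conclusion. You cannot simultaneously exhibit this failure and claim to have proved the theorem as stated. (Note that the paper's proof never verifies $\det(UT_B)=1$ at all --- your determinant bookkeeping is precisely the step it omits, and it is where the difficulty hides.)
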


\begin{proof}
Assume that square from $A$ exists in $S{{L}_{2}}({{\F}_{p}})$  or in $ES{{L}_{2}}({{\F}_{p}})$ correspondently. We denote matrix $B$ transformed to upper triangular form by $U{{T}_{B}}$. Let us show that there that provided condition above it always exists such $B:\,\,UT_{B}^{2}={{J}_{A}}$, where $U{{T}_{B}}$ is $B$ transformed to UTM form.  Then we show that it implies existing  of solution of \[{{X}^{2}}=A. \]
From the existence of the Jordan block for $A$ follows the existence of a similarity transformation $U$ transforming ${{B}^{2}}$ to the Jordan normal form $J_B$ because of  $A = {{B}^{2}}$ and A has non-trivial Jordan block denoted by $J_A$. But square root from $B^2$ this operator $U$ transforms in upper triangular form $UT_B$. Then if we find solution for

\begin{equation}\label{uppertriang}
UT_B^2=J_A
\end{equation}

we can obtain solution for ${{X}^{2}}=A$ because of the following:

\begin{equation}\label{eqtransform}
A=U\cdot (U{{T}_{B}})^2\cdot {{U}^{-1}}=(U\cdot U{{T}_{B}}\cdot {{U}^{-1}})(U\cdot U{{T}_{B}}\cdot {{U}^{-1}})={{B}^{2}}.
\end{equation}

It means that such matrix $U{{T}_{B}}$ satisfying \eqref{eqtransform}, exists and it can be transformed  by the same similarity transformation by conjugation in form $U{{T}_{B}}={{U}^{-1}}BU$ by the same matrix that transforms $A$ in ${{J}_{A}}$ because of ${{B}^{2}}=A$. To show the existing of such solution of \eqref{uppertriang} we acting by invers transformation $A=U\cdot (U{{T}_{B}})^2\cdot {{U}^{-1}}=(U\cdot U{{T}_{B}}\cdot {{U}^{-1}})(U\cdot U{{T}_{B}}\cdot {{U}^{-1}})={{B}^{2}}$, where $U$ is similarity transformation $B$ to


$$U{{T}_{B}}=\left(
  \begin{array}{cc}
   \beta \,\,\, & \gamma  \\
  0\,\,\, & \beta  \\
  \end{array}
 \right).$$
note that its diagonal elements ${{b}_{11}}={{b}_{22}}=\beta $ are the same. Therefore according to Lemma \ref{Trace} we have $\beta \in {{F}_{p}}$. Even more easier we can deduce it without Lemma 19. We have ${{b}_{11}}={{b}_{22}}=\beta $, then  $\beta +\beta =\text{Tr}({{U}^{-1}}BU)$. Therefore $2\beta \in {{\text{F}}_{p}}$. It implies that $\beta \in {{F}_{p}}$ if $p>2$ and

$$(UT_{B})^{2}=\left( \begin{array}{cc}
  {{\beta }^{2}}\,\,\,  &  2\beta \gamma  \\
  0\,\,\,\,\,\,\,\,\,  & {{\beta }^{2}} \\
\end{array} \right). $$

Here the element $\gamma $ can be chosen $\gamma :\,\,\, 2\beta \gamma =1$ so $\gamma = {2 \beta}^{-1}$ taking into account that $\beta =\sqrt{\lambda }$ which is already determined by $A$. Then  $(UT_{B})^{2}:$

$(UT_{B})^{2}=\left( \begin{array}{cc}
   {{\beta }^{2}}\,\,\, & 2\beta \gamma  \\
  0\,\,\,\,\,\,\,\,\, & {{\beta }^{2}} \\
\end{array} \right)=\left( \begin{array}{cc}
   {{\beta }^{2}}\,\,\, & 1 \\
  0\,\,\,\,\, & {{\beta }^{2}} \\
\end{array} \right)=J_A = \left(
	\begin{array}{cc}
	\lambda \, & 1 \\
	0 & \lambda \, \\
	\end{array}
\right) $.

Furthermore we show that these conditions is also necessary but not only sufficient. It means if $(\frac{\lambda }{p})=-1$, then there are no matrix $B$ over $S{{L}_{2}}({{F}_{p}})$ such that ${{B}^{2}}=A$. By a reversal of theorem condition  and using the representation in the form of UTM for and for we see that $B$ from $PS{{L}_{2}}({{F}_{p}})$ such that ${{B}^{2}}=A$. We see that according to the Lemma \ref{Trace}
 the eigenvalue of $B$ over lie in the main field ---  ${{\text{F}}_{p}}$. However, we assumed that  $(\frac{\lambda }{p})=-1$. Thus we obtain the desirable  contradiction.

Let us show that condition of  non-diagonalizability of matrix is necessary in the conditions of this Theorem. By virtue of the well-known theorem stating that if the algebraic multiplicity is equal to the geometric  multiplicity for each eigenvalue, then matrix is diagonalizable otherwise it is not diagonalizable, we see that if the condition of  similarity  to  ${{J}_{A}}=\left( \begin{array}{cc}
   \lambda \,\,\, & 1 \\
  0\,\,\, & \lambda  \\
\end{array} \right)$
indicated in this Theorem \ref{CriterionPSLJordan}  does not holds, then  such $A$ satisfy the conditions of this Theorem \ref{CriterionPSL}, where algebraic multiplicity is equal to geometrical. And since the condition \ref{Jblock} of this criterion is nature, therefore, it is no longer necessary to prove the non-diagonalizability condition in Theorem \ref{CriterionPSLJordan}.

Proof of \textbf{necessity}.
Furthermore we show that these conditions is also necessary but not only sufficient. It means if $(\frac{\lambda }{p})=-1$, then there are no matrix $B$ having non trivial Jordan block over $S{{L}_{2}}({{\F}_{p}})$ such that ${{B}^{2}}=A$. By a reversal of theorem condition  and using the representation in the form of UTM for and for we see that $B$ from $S{{L}_{2}}({{\F}_{p}})$ such that ${{B}^{2}}=A$. We see that according to the Lemma the eigenvalue of $B\in S{{L}_{2}}({{\F}_{p}})$ correspondingly,  lie in the main field -- ${{\F}_{p}}$. Furthermore according to Lemma \ref{eigenval} if $\beta $ is an eigenvalue for $B$ then ${{\beta }^{2}}$ is an eigenvalue for ${{B}^{2}}$, so we have ${{\beta }^{2}}=\lambda $.  However, we assumed that $(\frac{\lambda }{p})=-1$. Thus we obtain the desirable  contradiction.    The eigenvalue $\beta$ has geometrical dimension 1, because of in oppositive case geometrical $\dim\beta =2$ (dimension of eigenvector space of $\beta$), then we get that $J_{B}^{2}$ is only scalar matrix $B$.

The proof is fully completed.
\end{proof}



\begin{exm}
A  sufficiency  of the condition $(\frac{\lambda }{p})=1$ in Theorem \ref{CriterionPSLJordan} for $\exists $ $B:\,\,{{B}^{2}}=A$, where $A \sim {{J}_{A}}$ of size $2\times 2$ with one eigenvalue corresponding to one eigenvector is given by following matrix from $S{{L}_{2}}(R)$:

${{J}_{A}}=\left( \begin {array}{cc}
   1\,\,\, & 1 \\
  0\,\,\, & 1 \\
\end{array} \right)$ then $B={{\left( \begin{array}{cc}
  \mu \,\,\,  & 1 \\
  0\,\,\,  & \mu  \\
\end{array} \right)}^{2}}=\left( \begin {array}{cc}
   {{\mu }^{2}}\,\,\, & 2\mu  \\
  0\,\,\,\,\,\,  & {{\mu }^{2}} \\
\end{array} \right),\,\,\,\,\mu =\pm \sqrt{1}$.

This confirms Theorem \ref{CriterionPSL}.
Choosing the base for $B$ to $A$ be in Jordan form (in Jordan base):  $UB{{U}^{-1}}$ we obtain

\[\left( \begin{array}{cc}
   \frac{\mu }{2}\,\,\,\, & 1 \\
  0\,\,\,\,\,\, & \frac{\mu }{2} \\
\end{array} \right)={{J}_{B}}.\]

The last matrix is expressed by conjugating of $B$ by a diagonal matrix.
\end{exm}

\begin{exm}
 Consider an example confirming Theorem \ref{CriterionPSL} .
 Let \\
$A=\left( \begin{array}{rr}
  -1\,\, & 0 \\
  0\,\, & -1
\end{array} \right)={{\rho }_{180}}$.  This is a $180$ degree rotation matrix.  The $Tr(A) + 2 = 0$  then root has to exist in $S{{L}_{2}}(R)$. Then its square root $B\in S{{L}_{2}}(R)$ has form ${{\rho }_{90}}=\left( \begin{array}{cc}
   \,0\,\,\,\,& 1 \\
  -1\,\,& 0 \\
\end{array} \right)=B\in S{{L}_{2}}(R)$.  Note that $A$ is presented in the diagonal form. There are also roots $B_{1}^{{}}=\left( \begin{array}{cc}
   i\,\,\,\,\,\,\, & a \\
  0\,\,\, & -i \\
\end{array} \right)$ from $S{{L}_{2}}(C)$ as well as $B_{2}^{{}}=\left( \begin{array}{cc}
   -i\,\,\,\,\,\, & a \\
  \,0\,\,\,\,\,\, & i \\
\end{array} \right)$.
\end{exm}

\begin{rem} \label{diagonal form and 2 non quadratic residue}
 If $A\in S{{L}_{2}}({{F}_{p}})$ possesses a presentation in diagonal Jordan form over  ${{\text{F}}_{p}}$ and $(\frac{{{\lambda }_{1}}}{p})=-1,\,\,(\frac{{{\lambda }_{2}}}{p})=1$,  then such case does not give the existence of solution of ${{X}^{2}}=A$ in $S{{L}_{2}}({{F}_{p}})$.
\end{rem}

\begin{proof}  The condition  $(\frac{{{\lambda }_{1}}}{p})=-1$ means, that $\sqrt{{{\lambda }_{1}}}={{\beta }_{1}}\in {{\text{F}}_{{{p}^{2}}}}\backslash {{\text{F}}_{p}}$  and simultaneously $\sqrt{{{\lambda }_{2}}}={{\beta }_{2}}\in {{\text{F}}_{p}}$,  therefore ${{\beta }_{1}}+{{\beta }_{2}}=Tr(B)\notin {{F}_{p}}$. This implies non-existing of ${{\mu }_{B}}\left( x \right)$ over ${{\text{F}}_{p}}$.
\end{proof}

The following theorem  it is true  for $S{{L}_{2}}({k})$, even  $k$ is arbitrary perfect field. The following proof works for arbitrary perfect ${F}$ too.
\begin{thm}
 If a matrix $A\in S{{L}_{2}}({{F}_{}})$ is semisimple and diagonalizable over ${{\text{F}}_{p}}$ and $(\frac{{{\lambda }_{1}}}{p})=(\frac{{{\lambda }_{2}}}{p})=-1$, then  for the existing $\sqrt{A}$, it is necessary and sufficient, to $A$ be similar to a scalar matrix $D$.
 \end{thm}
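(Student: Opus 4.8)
The plan is to reduce the similarity-to-scalar condition to a statement about $A$ itself, then settle sufficiency by an explicit root and necessity by a Frobenius (Galois) argument on the eigenvalues of a hypothetical root. Throughout I would assume the characteristic of $\mathbb{F}$ is not $2$, since in $\mathbb{F}_{2^n}$ every element is a square and the hypothesis $\left(\frac{\lambda_1}{p}\right)=\left(\frac{\lambda_2}{p}\right)=-1$ then cannot hold, making the statement vacuous. First I would note that any matrix similar to a scalar matrix $D=\lambda E$ must equal $D$, because $U^{-1}(\lambda E)U=\lambda E$; hence ``$A$ is similar to a scalar matrix'' is equivalent to ``$A=\lambda E$''. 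Together with $\det A=\lambda_1\lambda_2=1$ this forces $\lambda^2=1$, so $\lambda=\pm1$; since $1$ is always a square while $\left(\frac{\lambda}{p}\right)=-1$, the only admissible case is $A=-E$ (which, for consistency of the hypothesis, requires $p\equiv 3\pmod 4$).

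For sufficiency I would produce the root explicitly. When $A=-E$ we are in the exceptional limiting case $\tr A+2=0$ already treated in the proof of Theorem \ref{CriterionPSL}. By Cayley--Hamilton every $B\in SL_2(\mathbb{F})$ obeys $B^2=(\tr B)B-E$, so $B^2=-E$ holds exactly when $\tr B=0$; the matrix $B=\left(\begin{smallmatrix}0&-1\\1&0\end{smallmatrix}\right)$ has trace $0$ and determinant $1$, hence $B\in SL_2(\mathbb{F})$ and $B^2=-E=A$. Thus $\sqrt{A}$ exists whenever $A$ is (similar to) a scalar matrix.

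The necessity direction carries the real content, and I would prove it by contraposition: assuming $A$ is not scalar, so $\lambda_1\neq\lambda_2$ with both eigenvalues non-square, I will show $\sqrt{A}\notin SL_2(\mathbb{F}_p)$. Put $\mu_i=\sqrt{\lambda_i}$; the hypothesis $\left(\frac{\lambda_i}{p}\right)=-1$ gives $\mu_i\in\mathbb{F}_{p^2}\setminus\mathbb{F}_p$. The decisive computation is $\mu_i^{\,p}=-\mu_i$: since $\lambda_i\in\mathbb{F}_p$ we have $(\mu_i^{\,p})^2=(\mu_i^2)^p=\lambda_i^p=\lambda_i$, so $\mu_i^{\,p}=\pm\mu_i$, and the sign is negative precisely because $\mu_i\notin\mathbb{F}_p$. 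If a root $B\in SL_2(\mathbb{F}_p)$ existed, then by Lemma \ref{eigenval} its eigenvalues are $\mu_1,\mu_2$ with $\det B=\mu_1\mu_2=1$ and $\tr B=\mu_1+\mu_2\in\mathbb{F}_p$. Applying the Frobenius automorphism, $(\mu_1+\mu_2)^p=\mu_1^{\,p}+\mu_2^{\,p}=-(\mu_1+\mu_2)$, so $\mu_1+\mu_2$ is negated by the nontrivial automorphism of $\mathbb{F}_{p^2}/\mathbb{F}_p$; membership in $\mathbb{F}_p$ then forces $2(\mu_1+\mu_2)=0$, whence $\mu_1+\mu_2=0$. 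Combined with $\mu_1\mu_2=1$ this gives $\mu_1^2=-1$, i.e. $\lambda_1=\lambda_2=-1$, contradicting $\lambda_1\neq\lambda_2$. Hence no root lies in $SL_2(\mathbb{F}_p)$ unless $A$ is scalar.

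I expect the Frobenius step to be the main obstacle: the care lies in justifying $\mu_i^{\,p}=-\mu_i$ rigorously and in imposing the correct $SL_2$ normalization $\mu_1\mu_2=1$, for it is exactly the determinant constraint that turns $\mu_1+\mu_2=0$ into $\lambda_1=-1$ and thereby the contradiction. The only other subtlety is the characteristic-$2$ exclusion recorded at the outset, which is what permits dividing by $2$; the same argument transfers verbatim to an arbitrary perfect field $\mathbb{F}$ of odd characteristic, with $\mathbb{F}_{p^2}/\mathbb{F}_p$ replaced by the quadratic extension generated by $\mu_1$.
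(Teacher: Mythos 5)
Your proof is correct, but it takes a genuinely different route from the paper's. The paper argues by explicit matrix computation: it passes to the diagonal form $D=\mathrm{diag}(d_1,d_2)$ of $A$, observes that a square root cannot itself be diagonal (its diagonal entries would be squares), normalizes a putative root $M$ by a diagonal conjugation (which fixes $D$) so that its upper-right entry is $1$, and then squares: the off-diagonal equation gives $m_{11}+m_{22}=0$ and the diagonal equations give $m_{21}+m_{11}^2=d_1$, $m_{21}+m_{22}^2=d_2$, whence $d_1=d_2$. You reach the same pivotal intermediate fact --- that any root must have trace zero --- but via the Frobenius automorphism: since each eigenvalue $\nu_i$ of a root lies in $\mathbb{F}_{p^2}\setminus\mathbb{F}_p$ with $\nu_i^{\,p}=-\nu_i$, the Frobenius-fixed trace must vanish, and the $SL_2$ constraint $\nu_1\nu_2=1$ then forces $\lambda_1=\lambda_2=-1$, a contradiction when $A$ is non-scalar. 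Your route buys several things the paper's does not: it avoids the paper's implicit nonvanishing assumptions (the conjugator $X=\mathrm{diag}(m_{11}^{-1},m_{21}^{-1})$ silently requires $m_{11},m_{21}\neq 0$, and the normal form assumes $m_{12}\neq 0$ can be scaled to $1$); it pins down exactly which scalar matrices can occur ($A=-E$, possible only for $p\equiv 3\pmod 4$) and gives a concrete witness $\left(\begin{smallmatrix}0&-1\\1&0\end{smallmatrix}\right)$ for sufficiency, which the paper leaves implicit; and it transfers cleanly to any perfect field of odd characteristic. What the paper's computation buys in exchange is constructive content: its system of equations shows precisely how to parametrize all non-diagonal roots once $d_1=d_2$, rather than merely certifying existence.
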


\begin{proof}
From the facts that $(\frac{{{\lambda }_{1}}}{p})=(\frac{{{\lambda }_{2}}}{p})=-1$ and the square of diagonal matrix is again the diagonal matrix follows the existence of root only in the off diagonal form, therefore we must find the solution $M$ among the set of non-diagonalizable matrices
$ D=\left( \begin{array}{cc}
    {{d}_{1}}\,\,\, & 0 \\
  0\,\,\, & {{d}_{2}} \\
 \end{array} \right) $
is the diagonal representation of matrix $A$, and let
 \begin {equation}\label{D}
 D={{M}^{2}},
\end {equation}
where $M\in S{{L}_{2}}({{F}_{p}})$. Because of $(\frac{{{d}_{1}}}{p})=(\frac{{{d}_{2}}}{p})=-1$ there is a  root in non-diagonal form. Also we note that there is a conjugation matrix $X$,

$$X=\left( \begin{array}{cc}
   m_{11}^{-1}\,\,\,& 0 \\
  0\,\,\,\, & m_{21}^{-1} \\
\end{array}  \right),$$  transforming  $M$  to $\tilde{M}$, where $\tilde{M}$ has following form

$$\tilde{M}=~\left( \begin{array}{cc}
   {{m}_{11}}\,\,\,&1 \\
  {{m}_{21}}\,\, &{{m}_{22}} \\
\end{array} \right). $$

Let's transform the equality $D={{M}^{2}}$ into $XD{{X}^{-1}}=XM{{X}^{-1}}XM{{X}^{-1}}$, where $XM{{X}^{-1}}=\tilde{M}$.
Note that $D$ and $XD{{X}^{-1}}$ have identical eigenvalues. Therefore we can solve the equation \eqref{D} for $XD{{X}^{-1}}$.  Let's consider matrix equation  $D={{M}^{2}}$,  let's transform it by conjugation
$D=XD{{X}^{-1}}=XM{{X}^{-1}}\cdot XM{{X}^{-1}}=\tilde{M}\tilde{M}={{\tilde{M}}^{2}}$,
wherein \\ $M=\left( \begin{array}{cc}
  & {{m}_{11}}\,\,\,\,{{m}_{12}} \\
 & {{m}_{21}}\,\,{{m}_{22}} \\
\end{array} \right)$, \,
$X=\left( \begin{array}{cc}
  & m_{11}^{-1}\,\,\,0 \\
 & 0\,\,\,\,\,\,m_{21}^{-1} \\
\end{array} \right)$
  and
$XM{{X}^{-1}}=\left( \begin{array}{ll}
   {{m}_{11}}\,\,\,\,& 1 \\
  {{m}_{21}}\,\, & {{m}_{22}} \\
\end{array} \right).$


 Since $D$ is a diagonal matrix, then it belongs to the commutative subgroup of diagonal matrices from  $S{{L}_{2}}({{F}_{p}})$, lets denote it as  $DS{{L}_{2}}({{F}_{p}})$.
Therefore and  $~XD{{X}^{-1}}$  is also a diagonal matrix. Moreover, due to the commutativity of the field ${{F}_{p}}$ we have $~XD{{X}^{-1}}=D.$
Now let's solve the matrix equation  for the reduced $\tilde{M}$

 \begin {equation}\label{XDX}
 D=~XD{{X}^{-1}}=(XM{{X}^{-1}})(XM{{X}^{-1}})={{\tilde{M}}^{2}},
\end {equation}

Note that equations \eqref{XDX} and \eqref{D} are equivalent since they are obtained by similarity transformations.

Note that equations (2) and (1) are equivalent since they are obtained by similarity transformations.
Let's write down the equation $$ {{\tilde{M}}^{2}} = {{\left( \begin{array}{cc}
  & {{m}_{11}}\,\,\,\,1 \\
 & {{m}_{21}}\,\,\,{{m}_{22}} \\
\end{array} \right)}^{2}}= \left( \begin{array}{cc}
  & {{d}_{1}}\,\,\,0 \\
 & 0\,\,\,{{d}_{2}} \\
\end{array} \right). $$

Thence we obtain the system of equations
$$\left\{ \begin{array}{cc}
  {{m}_{21}}+m_{11}^{2}={{d}_{1}} \\
 {{m}_{21}}+m_{22}^{2}={{d}_{2}} \\
 {{m}_{11}}+{{m}_{22}}=0, \\
\end{array} \right.$$
by substitution $m_{11}$ from the equation 3) ${{m}_{22}}=-{{m}_{11}}$ into equations 1) and 2) we express from \[2) \, \, {{m}_{21}}+m_{22}^{2}={{d}_{2}} \Rightarrow   {{m}_{21}}+{{(-m_{11}^{{}})}^{2}}={{d}_{2}}\]


also we take into consideration equation 1) $m_{11}^{2}+{{m}_{21}}={{d}_{1}}$. Thence ${{d}_{1}}={{d}_{2}}$ or more conveniently $d={{d}_{1}}={{d}_{2}}$. Wherein $d$ doesn't have to be a quadratic residue. Therefore the condition $(\frac{d}{p})=-1$ of theorem is met.
\end{proof}

\begin{lem}\label{IsomwithField}
 The matrix algebra $Alg [A]=\left\langle E,\,\,A \right\rangle \simeq {{F}_{{{p}^{2}}}}$.
\end{lem}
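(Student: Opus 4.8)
The plan is to realize $Alg[A]$ as a quotient of the polynomial ring ${\mathbb F}_p[x]$ and then identify that quotient with ${\mathbb F}_{p^2}$ via the uniqueness of finite fields of a given order. The relevant setting here is the anisotropic regular semisimple case treated just above, in which the eigenvalues $\lambda_1,\lambda_2$ of $A$ lie in ${\mathbb F}_{p^2}\setminus {\mathbb F}_p$ and satisfy $\left(\frac{\lambda_1}{p}\right)=\left(\frac{\lambda_2}{p}\right)=-1$; in particular $A$ is not a scalar matrix and its characteristic polynomial $\chi_A$ is irreducible of degree $2$ over ${\mathbb F}_p$. I would first record that, since $E$ and $A$ commute, $Alg[A]=\langle E,A\rangle$ is a commutative ${\mathbb F}_p$-algebra, and that it is spanned over ${\mathbb F}_p$ by $\{E,A\}$: by the Cayley--Hamilton theorem $A^2=\tr(A)\,A-\det(A)\,E$, so every power $A^k$ reduces to an ${\mathbb F}_p$-linear combination of $E$ and $A$, and the algebra generated by $A$ coincides with the two-element span.

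Next I would introduce the evaluation homomorphism $\varphi:{\mathbb F}_p[x]\to M_2({\mathbb F}_p)$ determined by $x\mapsto A$ and $1\mapsto E$. Its image is exactly ${\mathbb F}_p[A]=Alg[A]$ by the previous paragraph, and its kernel is the ideal generated by the minimal polynomial $\mu_A(x)$. Hence $Alg[A]\simeq {\mathbb F}_p[x]/(\mu_A(x))$. Because $A$ is not scalar, $E$ and $A$ are linearly independent over ${\mathbb F}_p$, so $\dim_{{\mathbb F}_p}Alg[A]=2$ and $\deg\mu_A=2$; in the anisotropic case $\mu_A=\chi_A$ is the irreducible quadratic with roots $\lambda_1,\lambda_2\notin {\mathbb F}_p$.

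It then remains to observe that a quotient of ${\mathbb F}_p[x]$ by an irreducible quadratic is a field: $(\mu_A(x))$ is a maximal ideal of the principal ideal domain ${\mathbb F}_p[x]$ precisely because $\mu_A$ is irreducible, so ${\mathbb F}_p[x]/(\mu_A(x))$ is a field of cardinality $p^2$. By the uniqueness (up to isomorphism) of the finite field with $p^2$ elements, this quotient is ${\mathbb F}_{p^2}$, and therefore $Alg[A]\simeq {\mathbb F}_{p^2}$.

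The main obstacle is not the algebra but the placement in the correct case: the isomorphism genuinely requires that $A$ be non-scalar (so that the dimension is exactly $2$) and that $\mu_A$ be irreducible over ${\mathbb F}_p$ (so that the quotient is a field rather than ${\mathbb F}_p\times {\mathbb F}_p$ or ${\mathbb F}_p[x]/(x^2)$). I would therefore make explicit that the hypotheses in force, namely $\left(\frac{\lambda_1}{p}\right)=\left(\frac{\lambda_2}{p}\right)=-1$ with $\lambda_1,\lambda_2\in {\mathbb F}_{p^2}\setminus {\mathbb F}_p$, are exactly what forces $\chi_A=\mu_A$ to be an irreducible quadratic and rules out the split and scalar situations; everything else is the standard structure theory sketched above.
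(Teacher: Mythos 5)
Your proof is correct, but it takes a genuinely different route from the paper's. You factor the evaluation map $\mathbb{F}_p[x]\to M_2(\mathbb{F}_p)$, $x\mapsto A$, through its kernel to get $Alg[A]\simeq \mathbb{F}_p[x]/(\mu_A(x))$, then invoke irreducibility of $\mu_A$ (maximal ideal in a PID) and uniqueness of finite fields of order $p^2$. The paper instead builds an explicit $\mathbb{F}_p$-linear correspondence $y_1A+x_1E \mapsto y_1\lambda + x_1 e$ between the basis $\{E,A\}$ of the algebra and the basis $\{e,\lambda\}$ of $\mathbb{F}_{p^2}$ (where $\lambda$ is an eigenvalue of $A$ lying in $\mathbb{F}_{p^2}\setminus\mathbb{F}_p$), and proves injectivity by hand: if two combinations coincided, one would get $(y_1-y_2)E=(x_1-x_2)A$, impossible because $\chi_A$ is irreducible while the characteristic polynomial of $E$ is reducible. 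What your route buys is that the ring structure comes for free --- the quotient construction automatically makes the bijection multiplicative, a point the paper only gestures at when it speaks of a ``correspondence between operations'' without verifying it. What the paper's route buys is explicitness: the concrete identification $A\leftrightarrow\lambda$, $E\leftrightarrow e$ is exactly what is used immediately afterwards (in Theorem \ref{inFp^2}) to compute $\sqrt{A}=x+Ay$ by solving $(x+\lambda y)^2=\lambda$ for $x,y\in\mathbb{F}_p$, so the explicit map is not incidental but the working tool. You also correctly isolated the hypotheses (non-scalar $A$, irreducible $\mu_A=\chi_A$) that exclude the degenerate quotients $\mathbb{F}_p\times\mathbb{F}_p$ and $\mathbb{F}_p[x]/(x^2)$, which is a point the paper leaves implicit in its standing semisimplicity assumptions.
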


\begin{proof}
We show that algebra $Alg \left[ M \right]=\left\langle E,A \right\rangle $ is isomorphic to finite field ${{F}_{{{p}^{2}}}}$.
As well-known from Galois theory, a quadratic extension of $\F_p$
can be constructed by involving of any external element.
As well-known from Galois theory, a quadratic extension of ${{F}_{p}}$ can be constructed by involving of any external element $g\in {{F}_{{{p}^{2}}}}\backslash {{F}_{p}}$  relatively to ${{F}_{p}}$. We denote this element by $i$, in particular, for $p=4m+3$ it may be an element satisfying the relation ${{i}^{2}}=-1$.
Note that the matrix of the rotation by 90 degrees, that is a matrix
$$I:= \left( \begin{array}{cc}
  0\,\,\,\,&1 \\
 -1\,\,&0 \\
 \end{array} \right) ={{\rho }_{90}}$$
 satisfies this relation and can used as an example of matrix $A$. In case when $p=4m+3$  such matrix $J:\,\,\varphi (J)=j$,  ${{j}^{2}}=-1$ exists too.

Obviously $\det A=1$, that's why $A\in S{{L}_{2}}({{F}_{p}})$ and ${{\mu }_{A}}(x)$ is irreducible.
We define mapping $\varphi :\,\,{{y}_{1}}A+{{x}_{1}}E\,\to ae+b\lambda ;\,\,\,\,a,b\in {{\text{F}}_{p}}$.
The mapping $\varphi $ can be more broadly described, in $S{{L}_{2}}[{{F}_{p}}]$ such a way that a matrix $A$   is found such that ${{A}^{2}}=-E$, then its e.g. $\lambda $ is assigned to it in the field ${{F}_{{{p}^{2}}}}$, while $\lambda \in {{F}_{{{p}^{2}}}}\backslash {{F}_{p}}$.
$\varphi :\,\,{{y}_{1}}A+{{x}_{1}}E\,\to ae+b\lambda ;\,\,\,\,a,b\in {{\text{F}}_{p}}$.
According to assumption of Lemma the matrix $A$ is semisimple and has not multiple eigenvalues (e.g.) which are not squares in ${{\text{F}}_{p}}$, so ${{\chi }_{A}}\left( x \right)$  is irreducible because of definition of semisimple matrix and condition ${{\lambda }_{1}}\ne {{\lambda }_{2}}$. According to Lemma about Frobenius automorphism its eigenvalues are conjugated in ${{\text{F}}_{{{p}^{2}}}}$.
The method of constructing of $\sqrt{A}$ is the following. Having isomorphism $A\lg \left[ A \right]=\left\langle E,A \right\rangle \simeq {{\text{F}}_{{{p}^{2}}}}$ we set a correspondence $\lambda \leftrightarrow A$ and correspondence between groups operations in ${{\text{F}}_{{{p}^{2}}}}$ and $A\lg \left[ A \right]$.  Therefore solving equation ${{\left( x+\lambda y \right)}^{2}}=\lambda $ relatively coefficients $x,\,\,y\in {{\text{F}}_{p}}$ we obtain coefficients for expression for $\sqrt{A}$ i.e. $\sqrt{A}=x+Ay$.
To prove the isomorphism, we establish a bijection between the generators of the algebra $A\lg \left[ A \right]=\left\langle E,A \right\rangle $ and the field ${{F}_{{{p}^{2}}}}$. It is necessary to establish in more detail that $A\leftrightarrow \lambda $ and  $E\leftrightarrow e$ also  the correspondence between the neutral elements of both structures, i.e. $\varphi \left( {\bar{0}} \right)=0$ where $0$ is the zero matrix.
To complete proof, it remains to show that the kernel of this homomorphism $\varphi $ is trivial. To do this, we show that among the elements of the algebra there are no identical ones. The surjectivity of $\varphi $ is obvious.  From the opposite,  we assume ${{y}_{1}}A+{{x}_{1}}E={{y}_{2}}A+{{x}_{2}}E$, ${{x}_{i}},\,{{y}_{i}}\in {{F}_{p}}$.  Then ${{y}_{1}}A+{{x}_{1}}E={{y}_{2}}A+{{x}_{2}}E$ it yields that
$\left( {{y}_{1}}-{{y}_{2}} \right)E=\left( {{x}_{1}}-{{x}_{2}} \right)A$,
which is impossible since the characteristic polynomial of the matrix $A$ is irreducible but  the characteristic polynomial of the identity matrix is reducible. Therefore, our algebra $A\lg \left[ A \right]$ is isomorphic to the completely linear space of linear polynomials from $E$ and $A$.
In the similar way we prove that polynomial of form $xe+y\lambda $ where $x,\,\,y\in {{F}_{p}}$ do not repeat. The proof is based on oppositive assumption about coinciding ${{x}_{1}}e+{{y}_{1}}\lambda ={{x}_{2}}e+{{y}_{2}}\lambda$ of polynomial with different coefficients. Then equality ${{x}_{1}}e+{{y}_{1}}\lambda ={{x}_{2}}e+{{y}_{2}}\lambda$  implies that $\left( {{y}_{1}}-{{y}_{2}} \right)\lambda =\left( {{x}_{1}}-{{x}_{2}} \right)e$ i.e. ${{y}_{1}}={{y}_{2}}$ and ${{x}_{1}}={{x}_{2}}$ that contradicts to assumption.
\end{proof}

\begin{thm}\label{inFp^2}
If a matrix $A\in G{{L}_{2}}({{F}_{p}})$ is semisimple with different eigenvalues and at least one an eigenvalue ${{\lambda }_{i}} \in F_{p^2} \setminus F_{p}$, $ i\in \{1,2\},  \, p>2$, then $\sqrt{A}\in G{{L}_{2}}({{F}_{p}})$ iff of $A$ satisfies:


 \begin{equation*}
(\frac{{{\lambda }_{i}}}{p})=1
 \, \, in \, \, the \, \, square \, \, extention \, \, that \, \, is \, \, {{F}_{p^2}}.
 \end{equation*}
\end{thm}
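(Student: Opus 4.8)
The plan is to reduce the matrix equation $B^2=A$ to a single scalar equation in the field $F_{p^2}$, using the algebra isomorphism of Lemma \ref{IsomwithField}. First I would record the shape of the spectrum: since $A$ is semisimple with two distinct eigenvalues and one of them, say $\lambda_1$, lies in $F_{p^2}\setminus F_p$, the characteristic polynomial $\chi_A$ is a degree-2 polynomial over $F_p$ with a root outside $F_p$, hence irreducible; the second eigenvalue is then the Frobenius conjugate $\lambda_2=\lambda_1^{p}$. Consequently $\mu_A=\chi_A$, so $A$ is non-derogatory, and by Lemma \ref{IsomwithField} the commutative algebra $Alg[A]=\langle E,A\rangle$ is isomorphic to $F_{p^2}$ via a map $\varphi$ with $\varphi(A)=\lambda_1$ and $\varphi(E)=1$.

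The crucial structural step is to show that \emph{every} square root of $A$ already lies inside $Alg[A]$. Indeed, if $B^2=A$ then $BA=B\cdot B^2=B^3=B^2\cdot B=AB$, so $B$ centralizes $A$; and because $A$ is non-derogatory ($\mu_A=\chi_A$ of degree $2$ equal to the matrix size), its centralizer in $M_2(F_p)$ coincides with the polynomial algebra $F_p[A]=Alg[A]$. This is the point I expect to be the main obstacle to phrase cleanly, since it is precisely what converts the matrix problem into a field-theoretic one; everything after it is formal. One may also see the centralizer identity concretely: over $F_{p^2}$ the matrix $A$ diagonalizes with distinct diagonal entries, any commuting $B$ is diagonal in the same basis, and intersecting with $M_2(F_p)$ leaves a $2$-dimensional space, forcing equality with $Alg[A]$.

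For necessity, suppose $B\in GL_2(F_p)$ satisfies $B^2=A$. By the previous step $B\in Alg[A]$, so $\mu:=\varphi(B)\in F_{p^2}$ obeys $\mu^2=\varphi(B^2)=\varphi(A)=\lambda_1$; hence $\lambda_1$ is a nonzero square in $F_{p^2}$, i.e. $\left(\frac{\lambda_1}{p}\right)=1$. Since the Frobenius automorphism $x\mapsto x^{p}$ sends squares to squares, the same holds for $\lambda_2=\lambda_1^{p}$, which is why the criterion is insensitive to the choice of $i$. For sufficiency, assume $\left(\frac{\lambda_i}{p}\right)=1$ and choose $\mu\in F_{p^2}$ with $\mu^2=\lambda_1$; writing $\mu=x+y\lambda_1$ with $x,y\in F_p$ (obtained by solving $(x+y\lambda_1)^2=\lambda_1$ over $F_p$), set $B:=\varphi^{-1}(\mu)=xE+yA\in M_2(F_p)$. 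Then $B^2=\varphi^{-1}(\mu^2)=\varphi^{-1}(\lambda_1)=A$, and $\det(B)^2=\det(B^2)=\det A\neq 0$ forces $\det B\neq 0$, so $B\in GL_2(F_p)$ and $\sqrt{A}=B$, completing both directions.
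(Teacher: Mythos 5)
Your proof is correct, and its core is the same as the paper's: both reduce the matrix problem to squareness of $\lambda_1$ in $\mathbb{F}_{p^2}$ via the isomorphism $Alg[A]=\left\langle E,A\right\rangle \simeq \mathbb{F}_{p^2}$ of Lemma \ref{IsomwithField}, and both invoke the Frobenius automorphism to see the criterion does not depend on which eigenvalue is chosen. The genuine difference is in the necessity direction. The paper simply asserts a ``correspondence between the property of being a square in $\mathbb{F}_{p^2}$ and in $Alg[A]$,'' which silently assumes that any square root $B\in GL_2(\mathbb{F}_p)$ of $A$ must already lie in $Alg[A]$; no argument for this is given there. You supply exactly the missing step: from $B^2=A$ you get $BA=B^3=AB$, and since $\mu_A=\chi_A$ (the matrix is non-derogatory, $\chi_A$ being irreducible), the centralizer of $A$ in $M_2(\mathbb{F}_p)$ equals $\mathbb{F}_p[A]=Alg[A]$ --- either by the standard centralizer-dimension fact or by your concrete diagonalization-over-$\mathbb{F}_{p^2}$ argument. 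This makes the ``only if'' half rigorous rather than implicit, and your closing observation that $\det B\neq 0$ follows from $\det(B)^2=\det A\neq 0$ tidies up the membership in $GL_2(\mathbb{F}_p)$, which the paper also leaves unstated. In short: same strategy, but your version closes a real gap in the published argument.
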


\begin{proof}
Firstly, we consider most complex and interesting case when $A$ is not diagonalizable, then ${{\chi }_{A}}\left( x \right)$ is irreducible over  ${{F}_{p}}$. By assumption, the matrix is semisimple and its characteristic polynomial is irreducible. So root $\lambda $ of ${{\chi }_{A}}(x)$ belongs to the quadratic extension of the field ${{F}_{p}}$. Since each element of ${{F}_{{{p}^{2}}}}$ can be presented in form $a+b\lambda ,\,\,\,a,b\in {{F}_{p}}$, then we can construct mapping of matrix algebra generators $E$ and $A$  in generators of ${{F}_{{{p}^{2}}}}$ and apply the aforementioned Lemma \ref{IsomwithField} about isomorphism establish correspondence between property be square in ${{\text{F}}_{{{p}^{2}}}}$ and in  $Alg \left[ A \right]=\left\langle E,A \right\rangle $. If one e.v. ${{\lambda}_{i}}$ is square in $F_{p^2}$ then so is second e.v. because of they are conjugated as roots of characteristic polynomial $\chi_A(x)$ by theorem about Frobenius automorphism (Frobenius endomorphism in perfect field became to be automorphism).
\end{proof}

\begin{exm} 
Consider the matrix $A=-E$, where $E$ is identity matrix in $S{{L}_{2}}({{F}_{3}})$  satisfying conditions of Theorem \ref{inFp^2} because of $(\frac{-1}{9})=1$ in ${{\text{F}}_{9}}$.
And there exists the matrix $\left( \begin{array}{cc}
   0\,\,\,&2 \\
  -2\,\,&0
\end{array} \right)\in S{{L}_{2}}({{F}_{3}})$  is square root for $A$. Indeed ${{I}^{2}}=-E$.

Another root of this equation ${{X}^{2}}=A$, where $A$ is matrix of elliptic type realizing rotation on 90 degrees ${{\rho }_{90}}=\left( \begin{array}{cc}
  \,\,\, 0\,\,\,\,1 \\
  -1\,\,0 \\
\end{array} \right)=I$ because of ${{I}^{2}}=-E$, is matrix of parabolic type.

The matrix $2I$ is the square in $G{{L}_{2}} \left ({{F}_{3}} \right)$ because of existing such an element ${{\left( \begin{array}{cc}
   \,\,\,1\,\,\, 1 \\
  -1\,\,\,\,1 \\
\end{array} \right)^{2}}}=2\left(\begin{array}{cc}
   \,\,  \ 0\,\,\,\,1 \\
  -1\,\,0 \\
\end{array} \right)=2I.$

\end{exm}


\begin{exm}
Consider the diagonal matrix $A \in S{{L}_{2}}({{F}_{3}})$ emphasizing the need for the condition $F_{p^2} \setminus F_{p}$ in Theorem \ref{inFp^2} for semisimple matrix.
It is easy to verify the absence of root from $A = \left( \begin{array}{cc}
   1\, \,\,\,\,0 \\
 \, 0\, -1 \\
\end{array} \right)$ in $S{{L}_{2}}({{F}_{3}})$.
\end{exm}

\begin{exm}
Consider the diagonal matrix $A \in S{{L}_{2}}({{F}_{3}})$ emphasizing the need for the condition $F_{p^2} \setminus F_{p}$ in Theorem \ref{inFp^2} for semisimple matrix.
It is easy to verify the absence of root from $A = \left( \begin{array}{cc}
   1\, \,\,\,\,0 \\
 \, 0\, -1 \\
\end{array} \right)$ in $S{{L}_{2}}({{F}_{3}})$.
\end{exm}

\begin{thm} \label{diagonal form}  If a matrix $A\in S{{L}_{2}}({{F}_{p}})$ ($A \in GL({F}_{p})$) possesses diagonal Jordan form over ${{\text{F}}_{p}}$, then $\sqrt{A}\in S{{L}_{2}}({{F}_{p}})$ ($GL({F}_{p})$) if and only  if $(\frac{{{\lambda }_{1}}}{p})=1$ and $(\frac{{{\lambda }_{2}}}{p})=1$ over ${{\text{F}}_{p}}$.
\end{thm}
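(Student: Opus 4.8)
The plan is to reduce everything to the diagonal representative and then argue each implication separately, with the hard part concentrated in the necessity direction. Since $A$ is diagonalizable over $\mathbb{F}_p$, write $A=P^{-1}DP$ with $D=\mathrm{diag}(\lambda_1,\lambda_2)$ and $P\in GL_2(\mathbb{F}_p)$. As already observed in the proof of Theorem \ref{CriterionPSL} (the computation $B=U^{-1}B'U$), conjugation by a matrix over $\mathbb{F}_p$ carries a square root of $D$ to a square root of $A$ lying in the same group and conversely, so it suffices to decide squareness for $D$ itself. I would also record at the outset that $\det A=\lambda_1\lambda_2$, which equals $1$ in the $SL_2$ case.

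For \textbf{sufficiency}, assume $\left(\frac{\lambda_1}{p}\right)=\left(\frac{\lambda_2}{p}\right)=1$. Then $\mu_i:=\sqrt{\lambda_i}\in\mathbb{F}_p$, and $B_0=\mathrm{diag}(\mu_1,\mu_2)$ satisfies $B_0^2=D$; hence $B=P^{-1}B_0P\in GL_2(\mathbb{F}_p)$ is a root of $A$, which settles the $GL_2$ statement. For the $SL_2$ statement I would use $\lambda_1\lambda_2=1$, which forces $\mu_1\mu_2=\pm1$; since the two signs of $\mu_1,\mu_2$ may be chosen independently, I select them so that $\mu_1\mu_2=1$, making $\det B=1$ and placing $B$ in $SL_2(\mathbb{F}_p)$.

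For \textbf{necessity}, suppose $B\in SL_2(\mathbb{F}_p)$ (resp.\ $GL_2(\mathbb{F}_p)$) with $B^2=A$. By Lemma \ref{eigenval} the eigenvalues of $B$ square to $\lambda_1,\lambda_2$, so it remains only to force them into $\mathbb{F}_p$. The clean mechanism here, valid when $\lambda_1\neq\lambda_2$, is that $B$ commutes with $A=B^2$, and a matrix with distinct eigenvalues has as its centralizer exactly the polynomials in $A$; consequently $B$ is simultaneously diagonalized by $P$, so $PBP^{-1}$ is diagonal. Its diagonal entries $\mu_1,\mu_2$ are entries of a matrix over $\mathbb{F}_p$ conjugated by $P\in GL_2(\mathbb{F}_p)$, hence lie in $\mathbb{F}_p$, and $\lambda_i=\mu_i^2$ gives $\left(\frac{\lambda_i}{p}\right)=1$. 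This centralizer/simultaneous-diagonalizability step is the one place where distinctness of the eigenvalues is genuinely used, and I expect it to be the main obstacle to state cleanly.

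The remaining delicacy, which I would flag explicitly, is the repeated-eigenvalue subcase $\lambda_1=\lambda_2$, i.e.\ $A=\lambda E$ scalar. There the necessity argument breaks down because a root of a scalar matrix need not be diagonalizable, and indeed a root can exist even when $\lambda$ is a nonresidue (for instance $\sqrt{-E}\in SL_2(\mathbb{F}_3)$ despite $\left(\frac{-1}{3}\right)=-1$). That situation is precisely the one already governed by the earlier theorem characterizing when a scalar matrix is a square, so I would understand the present statement to concern the regular diagonalizable case $\lambda_1\neq\lambda_2$ and invoke that earlier result to dispose of the scalar exception rather than attempt to fold it into the quadratic-residue criterion here.
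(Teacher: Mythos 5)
Your proof is correct, but your necessity argument is genuinely different from the paper's. The paper proves necessity by a Galois-theoretic count of roots of the minimal polynomial: assuming $\left(\frac{\lambda_1}{p}\right)=\left(\frac{\lambda_2}{p}\right)=-1$, each $\mu_i=\sqrt{\lambda_i}$ lies in $\mathbb{F}_{p^2}\setminus\mathbb{F}_p$, so $\mu_B(x)$ over $\mathbb{F}_p$ would have to admit all four distinct values $\pm\mu_1,\pm\mu_2$ as roots while having degree $2$, a contradiction; the mixed case $\left(\frac{\lambda_1}{p}\right)=-1$, $\left(\frac{\lambda_2}{p}\right)=1$ is not handled inside the theorem's proof at all but is delegated to the earlier Remark \ref{diagonal form and 2 non quadratic residue}, which argues via $\mathrm{tr}(B)=\beta_1+\beta_2\notin\mathbb{F}_p$. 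Your centralizer argument --- $B$ commutes with $A=B^2$, and a matrix with distinct eigenvalues is centralized only by polynomials in itself, forcing $PBP^{-1}$ to be diagonal with entries in $\mathbb{F}_p$ --- treats both failing cases uniformly in one stroke and is more elementary, avoiding Frobenius conjugacy entirely. It also makes visible exactly where $\lambda_1\neq\lambda_2$ is used, which leads to your second genuine improvement: you flag that the statement as written is false for scalar matrices (e.g.\ $-E$ over $\mathbb{F}_3$ has the root $\left(\begin{smallmatrix}0&1\\-1&0\end{smallmatrix}\right)\in SL_2(\mathbb{F}_3)$ although $\left(\frac{-1}{3}\right)=-1$) and you correctly route that case to the paper's earlier theorem on semisimple matrices with two non-residue eigenvalues. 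The paper's proof silently invokes ``the fact that $\lambda_1\ne\lambda_2$'' without ever stating the hypothesis, so your explicit exclusion of the scalar case is a point of rigor the paper lacks. One further small gain on your side: in the sufficiency direction you verify that the signs of $\mu_1,\mu_2$ can be chosen so that $\det B=1$, placing the root in $SL_2(\mathbb{F}_p)$ rather than merely $GL_2(\mathbb{F}_p)$; the paper's sufficiency paragraph does not address the determinant at all.
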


\begin{proof}
From condition $(\frac{{{\lambda }_{1}}}{p})=1$ and $(\frac{{{\lambda }_{2}}}{p})=1$ it is followed, that ${{\mu }_{A}}\left( x \right)$ is reduced over ${{\text{F}}_{p}}$. Therefore why ${{\mu }_{1}},\,\,{{\mu }_{2}}\in {{\text{F}}_{p}}$ exist ${{\mu }_{B}}\left( x \right)$ over ${{\text{F}}_{p}}$ exists for matrix $B:\,\,{{B}^{2}}=A$. Assume that  $(\frac{{{\lambda }_{1}}}{p})=-1,\,\,(\frac{{{\lambda }_{2}}}{p})=-1$ prove, that while $\sqrt{A}\notin S{{L}_{2}}({{F}_{p}})$. We use proof by contradiction. Let  $(\frac{{{\lambda }_{1}}}{p})=-1,\,\,(\frac{{{\lambda }_{2}}}{p})=-1$  therefore roots from eigenvalues ${{\lambda }_{1}},\,\,{{\lambda }_{2}}$ in general belongs to ${{\text{F}}_{{{p}^{2}}}}$ while its roots ${{\mu }_{1}},\,\,{{\mu }_{2}}$ is not conjugated as roots from different values of ${{\lambda }_{1}},\,\,{{\lambda }_{2}}$.

Let's find minimal polynomial for $B=\sqrt{A}$.
    Minimal polynomial of matrix $B$ is ${{\mu }_{B}}(x)={{x}^{2}}-bx+c$ and it has different roots ${{\mu }_{1}},\,\,{{\mu }_{2}}$, where ${{\mu }_{1}}+{{\mu }_{2}}=Tr(B)=b$ e $\det B={{\mu }_{1}}{{\mu }_{2}}$. From the existence of diagonal representation for $A$ reducibility of ${{\mu }_{A}}\left( x \right)$ follows. From the reducibility of ${{\mu }_{A}}\left( x \right)$ over ${{\text{F}}_{p}}$ and the fact that ${{\lambda }_{1}}\ne {{\lambda }_{2}}$ follows ${{\mu }_{1}},\,\,{{\mu }_{2}}$ is not  conjugated as the roots of different values of ${{\lambda }_{1}},\,\,{{\lambda }_{2}}$ and it is obvious that $\mu _{1}^{2}\,\ne \,\mu _{2}^{2}$. But the root ${{\mu }_{1}}$ is conjugated with $-{{\mu }_{1}}$ and ${{\mu }_{1}}\in {{\text{F}}_{{{p}^{2}}}}\backslash {{\text{F}}_{p}}$. But $-{{\mu }_{1}}$ is also a root, since ${{\left( \pm {{\mu }_{1}} \right)}^{2}}={{\lambda }_{1}}$ therefore it can be the root for ${{\mu }_{B}}(x)$. Similar situation is with root ${{\mu }_{2}}$ and $-{{\mu }_{2}}$. Therefore, we indicated as many as 4 roots for ${{\mu }_{B}}(x)$ but $B\in S{{L}_{2}}({{F}_{{{p}^{2}}}})$ therefore $\deg \left( {{\mu }_{B}}(x) \right)=2$. This contradiction arises from the assumption that $\sqrt{A}\in S{{L}_{2}}({{F}_{p}})$ on condition $(\frac{{{\lambda }_{1}}}{p})=-1,\,\,(\frac{{{\lambda }_{2}}}{p})=-1$.
\end{proof}

\subsection{Matrix roots of higher powers}

\textbf{Hypothesis}.  If we consider vector space over the same perfect field $k$ over which we consider $G{{L}_{2}}(k)$ then we have $\sqrt[3]{A}\in Span\{A,\,\,E\}$ over  $k$, where Span is linear span.

For proof we take into account Cayley-Hamilton's equation and apply a reduction transformation of the second degree, then we get an expression similar to
$\sqrt[3]{A}=\frac{A+tr\left( \sqrt[3]{A} \right)\det (\sqrt[3]{A})}{t{{r}^{2}}\left( \sqrt[3]{A} \right)- \sqrt[3]{\det \left( A \right)}}$
which after transformations yields expression of root $\sqrt[3]{A}=\lambda A+\beta $, where  $\lambda, \beta \in k$.

If we restrict the set of matrices to the group $GL_2(F_p)$, then the formulation of the theorem will take the next form.

\textbf{Proposition}.  If $B\in G{{L}_{2}}({{\mathbb{F}}_{p}})$ is root of equation ${{X}^{3}}=A$, then

$$B=\frac{A+tr(\sqrt[3]{A})\sqrt[3]{\det (A)}} {\left(tr{ \sqrt[3]{A} }\right)^{2}-\sqrt[3]{\det (A)}},$$
 where $A\in G{{L}_{2}}({{\mathbb{F}}_{p}})$.

\begin{proof}
Proof.  If $\sqrt[3]{A}\in G{{L}_{2}}({{\mathbb{F}}_{p}})$ then we consider Cayley-Hamilton equation (C.H.E.)${{A}^{3}}-tr\left( A \right){{A}^{2}}+\left( {{\lambda }_{1}}{{\lambda }_{2}}+{{\lambda }_{1}}{{\lambda }_{3}}+{{\lambda }_{2}}{{\lambda }_{3}} \right)A-\det \left( A \right)=0$.
Note, that $tr{{\left( A \right)}^{2}}={{\left( {{\lambda }_{1}}+{{\lambda }_{2}}+{{\lambda }_{3}} \right)}^{2}}=\lambda _{1}^{2}+\lambda _{2}^{2}+\lambda _{3}^{2}-\left( {{\lambda }_{1}}{{\lambda }_{2}}+{{\lambda }_{1}}{{\lambda }_{3}}+{{\lambda }_{2}}{{\lambda }_{3}} \right)$.

Consider  C.H.E.  for  $A:\, dimA=2$,  ${{A}^{2}}-tr\left( A \right)\cdot A+\det \left( A \right)\cdot 1=0$.
Multiplying last equation on $A$ admit us obtain the chain of transformation:
\begin{align} \label{KHE}
   {{A}^{3}} & =\left( tr\left( A \right)A-\det \left( A \right) \right)A=tr\left( A \right){{A}^{2}}- \det \left( A \right)A= \nonumber\\
   & =tr\left( A \right)\left( tr\left( A \right)A-\det \left( A \right) \right)-\det \left( A \right)A=  \nonumber\\[-3mm]
   & ~ \\[-3mm]
  & =tr{{\left( A \right)}^{2}}A-tr\left( A \right)\det \left( A \right)-\det \left( A \right)A= \nonumber\\
 & =\left( tr{{\left( A \right)}^{2}}-\det \left( A \right) \right)A-tr\left( A \right)\det \left( A \right). \nonumber
\end{align}

By applying substitute matrix $\sqrt[3]{A}$ instead of $A$ we express

\begin{equation}\label{2}
\sqrt[3]{A}=\frac{A + tr\left(     \sqrt[3]{A} \right)\sqrt[3]{\det A}}{tr^{2}{{\left( \sqrt[3]{A} \right)}}-\sqrt[3]{\det \left( A \right)}}.					
\end{equation}

Thus, $\sqrt[3]{A}=\frac{A\,+tr\left( \sqrt[3]{A} \right)\sqrt[3]{\det \left( A \right)}}{\,\left( t{{r}^{2}}\left( \sqrt[3]{A} \right)-\sqrt[3]{\det \left( A \right)} \right)}$.

Note that $\det \left( \sqrt[3]{A} \right)=\sqrt[3]{\det \left( A \right)}$ because of determinant is homomorphism.

But $tr\left( \sqrt[3]{A} \right)$ is still not computed.
 From \eqref{KHE}
 we conclude \\ ${A}^{3}=\left( tr{{\left( A \right)}^{2}}-\det \left( A \right) \right)A-tr\left( A \right)\det \left( A \right).$ Computing a trace from both sides we obtain
$tr\left( {{A}^{3}} \right)=tr{{\left( A \right)}^{3}}-3\det \left( A \right)tr\left( A \right)$.

We put $\sqrt[3]{A}$ instead of $A$, then we get
$tr\left( A \right)=tr{{\left( \sqrt[3]{A} \right)}^{3}}-3\sqrt[3]{\det A}tr\left( \sqrt[3]{A} \right).$

We need to solve
$tr\left( A \right)=tr{{\left( \sqrt[3]{A} \right)}^{3}}-3\sqrt[3]{\det A}tr\left( \sqrt[3]{A} \right)$.

We denote $\sqrt[3]{A}$ by $X$ and obtain the equation

$${{X}^{3}}-3\sqrt[3]{\det \left( A \right)}X-tr\left( A \right)=0.$$

The \emph{solvability} of this equation over base field $\mathbb{F}_p$ is equivalent to the \emph{existence} of a trace $\sqrt[3]{A}$ in the base field.

In view of this we derive number of roots in $S{{L}_{2}}\left( \text{F} \right)$.
Let $p=3\sqrt[3]{\det A}$, $q=\tr A$  then we have 1 root in this field if \[D=\frac{{{p}^{3}}}{3}+\frac{{{q}^{2}}}{2}=-\frac{27\det A}{3}+\frac{{{(trA)}^{2}}}{2}>0.\]
And we have 3 different roots if $-\frac{27\det A}{3}+\frac{{{(trA)}^{2}}}{2}<0,$ in case $D=0$ then there are one root and 2 multiple roots over this field.

Now we consider singular case:
\begin{itemize}
\item $(tr{B})^2 - \det{B}=0$, where $B= \sqrt[3]{A}$. \\
In this case in view of $\det{B} = (tr{B})^2$ and from \eqref{2} we obtain $$A=B^3 =  - tr{B} \det{B} \cdot E = -(tr{B})^3 \cdot E.$$
From that we can compute $tr{B}$ as a root of the equation $x^3+\dfrac{tr{A}}{2}=0$.

\item If $B^3=0$, then it's minimal canceling polynomial is $X^2$ or $X$. By Celly Hamilton equation (C.H.E) $B^2 - tr{B} \cdot B + \det{B} \cdot E = 0$, which leads us to $tr{B}=0, \det{B}=0$.
\end{itemize}
\end{proof}

For generalization on a matrix ring we reformulate previous statement in the following way.

\begin{prop} Let $A\in M_2(\mathbb{F}_p)$. Then it's cube roots $R = \{B \in M_2(\mathbb{F}_p)  \mid B^3 = A\}$ can be obtained as follows:
\begin{enumerate}
\item If $A=0$, then $R=\{B \in M_2(\mathbb{F}_p) \mid \det{B}=0,~\tr{B}=0 \}$;
\item If $A = c^3 E$, where $c \in \mathbb{F}_p / 0$, then $R=\{c \cdot B \in M_2(\mathbb{F}_p) \mid B^3 = E\}$;
\item In other cases $R \subset \left\{B \in M_2(\mathbb{F}_p) \left|  B=\dfrac{A + ab \cdot E}{a^2-b} \right., a=\tr{ \sqrt[3]{A}}, \right.$\\ $\left.b^3=\det{A}, a^3-3ab = \tr{A} \,  \right\}$.
\end{enumerate}

\end{prop}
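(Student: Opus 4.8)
The plan is to reduce everything to the Cayley--Hamilton identity for $2\times 2$ matrices and then to peel off the two scalar cases from the generic one. First I would record the master computation: for any $B\in M_2(\mathbb{F}_p)$ with $a:=\tr B$ and $b:=\det B$, Cayley--Hamilton gives $B^2=aB-bE$, and multiplying once more by $B$ yields
\begin{align*}
B^3 &= aB^2-bB = a(aB-bE)-bB = (a^2-b)B-ab\,E.
\end{align*}
This single identity is the engine for all three items. Taking the trace and the determinant of $B^3=A$, and using that $\det$ is multiplicative, I would extract the two scalar constraints $\det A=b^3$ and $\tr A=(a^2-b)a-2ab=a^3-3ab$, which are exactly the relations appearing in item (3).

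Next I would dispose of the scalar situations. For $A=0$ (item 1), a cube root $B$ is nilpotent, so over $\overline{\mathbb{F}_p}$ its only eigenvalue is $0$; hence $\tr B=\det B=0$, and conversely $\tr B=\det B=0$ forces $B^2=0$ by Cayley--Hamilton, whence $B^3=0$. This gives the asserted equality. For $A=c^3E$ with $c\neq 0$ (item 2), the substitution $B=cB'$ is a bijection between $\{B:B^3=c^3E\}$ and $\{B':B'^3=E\}$, since $c$ is invertible; this is precisely the claimed reduction to cube roots of the identity, and I would leave the enumeration of cube roots of $E$ aside as it is not part of the statement.

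For the generic situation (item 3) I would start from the master identity written as $A=(a^2-b)B-ab\,E$ with $a=\tr B$, $b=\det B$. The key structural observation is that if $a^2-b=0$ then $A=-ab\,E=-a^3E$ is scalar, and being the cube of the scalar $-a\in\mathbb{F}_p$ it falls under item (1) (if $a=0$) or item (2) (if $a\neq0$); therefore in all remaining cases $a^2-b\neq 0$, and I may solve
\[
B=\frac{A+ab\,E}{a^2-b},
\]
with $(a,b)$ constrained by $b^3=\det A$ and $a^3-3ab=\tr A$. This establishes the inclusion $R\subset\{\dots\}$. I expect the main obstacle to be exactly the book-keeping around the singular value $a^2-b=0$: one must argue that it arises only for scalar $A$, already treated, and hence never obstructs the generic formula. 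I would close by explaining why the statement asserts only $\subset$ rather than equality: the relations fix the parameters $(a,b)$, but turning a parameter solution into a genuine root requires the converse check that the matrix produced by the formula really has trace $a$ and determinant $b$; a short substitution using $\det(A+ab\,E)=\det A+ab\,\tr A+a^2b^2$ together with the two constraints confirms this consistency whenever $a^2-b\neq 0$, so that the inclusion in fact sharpens to an equality once the excluded singular locus is discarded.
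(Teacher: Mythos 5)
Your proposal is correct and follows essentially the same route as the paper: the Cayley--Hamilton identity $B^3=(a^2-b)B-ab\,E$, the observation that the singular locus $a^2-b=0$ forces $A=(-a)^3E$ and hence falls under the scalar cases, and the constraints $b^3=\det A$, $a^3-3ab=\tr A$ obtained from the determinant and the trace. You go slightly further than the paper by verifying the converse inclusion in item (3) via $\det(A+ab\,E)=b(a^2-b)^2$ (so that the formula's output really has trace $a$, determinant $b$, and cubes to $A$) and by proving both directions of the set equality in item (1), but the core argument is the same.
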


\begin{proof}
\begin{enumerate}
\item If $B^3=0$, then it's minimal canceling polynomial is $X^2$ or $X$. By Celly Hamilton equation (C.H.E) $B^2 - \tr{B} \cdot B + \det{B} \cdot E= 0$, which leads us to $\tr{B}=0,   \,\,  \det{B}=0$;
\item If $B$ is a solution of $X^3 - c^3 \cdot E=0$, then it's easy to see that $B' = c^{-1} B$ is a solution of $X^3 - E=0$;
\item Consider C.H.E for $B$:
$$B^2 - \tr{B} \cdot B + \det{B} \cdot E = 0.$$
Multiplying last equation by $B$ we proceed with the following chain of transformations:
\begin{multline*}
 B^3=
(\tr{B}\cdot B - \det{B} \cdot E) \cdot B =
\tr{B} \cdot B^2 - \det{B}\cdot B =
\tr{B} (\tr{B}\cdot B - \det{B} \cdot E) - \det{B} \cdot B=\\
=(\tr{B})^2 \cdot B - \tr{B} \det{B} \cdot E- \det{B} \cdot B =
((\tr{B})^2 - \det{B}) \cdot B - \tr{B} \det{B} \cdot E.
\end{multline*}
If $(\tr{B})^2 - \det{B}=0$, then we obtain $A=B^3 =  -\tr{B} \det{B} \cdot E =(-\tr{B})^3 \cdot E$, which leads us to previous cases.\\
\\
Otherwise $(\tr{B})^2 - \det{B}\not=0$ and we express $B$:
$$ B = \dfrac{B^3+\tr{B}\det{B}~E}{(\tr{B})^2-\det(B)}.$$

Now since $B^3=A$ we conclude $\det{A} = \det{B^3} = (\det{B})^3$ and hence $\det{B}$ is a root of polynomial $x^3 - \det{A} = 0$.

Last thing one remains to find $\tr{B}$.
By computing trace from both sides of $A=((\tr{B})^2 - \det{B}) \cdot B - \tr{B} \det{B} \cdot E$ we get:
$$\tr{A} = (\tr{B})^3 -3 \tr{B} \det{B}$$
From which we conclude that $\tr{B}$ is a root of $x^3-3 \det{B} \cdot x-\tr{A}=0$.
\end{enumerate}
\end{proof}

In general case we define complete symmetric polynomial of $n$-th degree in two variables: $$h_n(x,y) = \sum\limits_{k=0}^n x^ky^{n-k}.$$
In view of the fundamental theorem of symmetric polynomials there is one unique polynomial $Q(x,y) \in \mathbb{F}_p[x,y]$, such that:
$Q(e_1, e_2) = h_n$, where $e_1 = x+y$, $e_2 = xy$ --- elementary symmetric polynomials.

Likewise we determine the power symmetric polynomial of $n$-th degree in two variables: $$p_n(x,y) = x^n + y^n.$$
And polynomial  $P(x,y) \in \mathbb{F}_p[x,y]$, such that $P(e_1, e_2) = p_n$.

Now we prove the following lemma.
Let us define sequences $s_n = \tr{B} ~s_{n-1} + t_{n-1}$ and
 $t_n = - \det{B}~ s_{n-1}$ with initial conditions $s_1 = 1, t_1 = 0$, $s_2 = tr B$ and $t_2 = - det B$.
The parameters $\tr(B)$ and the determinant of matrix $B$ can be calculated thanks to Lemma \ref{eigenval} or by using the inversion of the Chebyshev polynomial.

\begin{lem}
Sequences $s_n$, $t_n$ satisfy recurrent equation with characteristic polynomial $c(x)$ which is also characteristic polynomial for matrix $B$.

\end{lem}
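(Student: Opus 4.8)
The plan is to eliminate $t$ from the coupled first-order system and show that $s_n$ alone obeys the second-order recurrence attached to $c(x)$, then deduce the same recurrence for $t_n$. Recall that for a $2\times 2$ matrix the characteristic polynomial is $c(x)=x^2-\tr(B)x+\det(B)$, and the linear recurrence whose characteristic polynomial is $c(x)$ reads
\[
a_n=\tr(B)\,a_{n-1}-\det(B)\,a_{n-2}.
\]
So the whole claim reduces to verifying that $s_n$ and $t_n$ each satisfy this identity for $n\ge 3$.

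First I would substitute the defining relation $t_{n-1}=-\det(B)\,s_{n-2}$ directly into $s_n=\tr(B)\,s_{n-1}+t_{n-1}$. This immediately gives $s_n=\tr(B)\,s_{n-1}-\det(B)\,s_{n-2}$, which is exactly the recurrence above. For $t_n$ I would multiply this last identity (written at index $n-1$) by $-\det(B)$ and use $t_n=-\det(B)\,s_{n-1}$, $t_{n-1}=-\det(B)\,s_{n-2}$, $t_{n-2}=-\det(B)\,s_{n-3}$ to obtain $t_n=\tr(B)\,t_{n-1}-\det(B)\,t_{n-2}$. Thus both sequences are governed by $c(x)$, and one checks that the prescribed initial data $s_1=1,\ s_2=\tr(B)$ and $t_1=0,\ t_2=-\det(B)$ are consistent with launching this second-order recurrence.

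The conceptual reason behind the statement, which I would record as a remark, is Cayley--Hamilton: writing $B^n=s_nB+t_nE$ and multiplying $B^2=\tr(B)B-\det(B)E$ by $B^{n-2}$ yields $B^n=\tr(B)B^{n-1}-\det(B)B^{n-2}$, and comparing coefficients of $B$ and $E$ reproduces the recurrence exactly. There is no real obstacle here; the only point requiring care is the degenerate case in which $B$ is scalar, so that $B$ and $E$ are linearly dependent and the coefficient comparison is not unique. In that case I would simply fall back on the direct substitution argument, which never uses independence of $B$ and $E$ and hence remains valid for every $B\in M_2(\mathbb{F}_p)$.
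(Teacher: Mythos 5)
Your proof is correct and rests on the same underlying idea as the paper's: decoupling the first-order system $s_n=\tr(B)\,s_{n-1}+t_{n-1}$, $t_n=-\det(B)\,s_{n-1}$ to exhibit the second-order recurrence $a_n=\tr(B)\,a_{n-1}-\det(B)\,a_{n-2}$ attached to $c(x)$. The execution differs in a way worth noting, and yours is the more robust route. The paper eliminates $s_n$ first, by forming the combination $\det(B)\,s_n+\tr(B)\,t_n=\det(B)\,t_{n-1}$ and substituting into the relation $t_{n+1}=-\det(B)\,s_n$, which yields the recurrence for $t_n$; it then asserts that $s_n$ obeys the same recurrence because $s_n$ and $t_n$ are ``linearly dependent'' --- a step that tacitly inverts $t_{n+1}=-\det(B)\,s_n$ and therefore assumes $\det(B)\neq0$. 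Your direct substitution of $t_{n-1}=-\det(B)\,s_{n-2}$ into the defining relation for $s_n$ produces the $s$-recurrence in one line with no division, and rescaling gives the $t$-recurrence; this stays valid when $\det(B)=0$ and for scalar $B$, the degenerate case you rightly single out. One small indexing point: your derivation of the $t$-recurrence at $n=3$ would invoke the undefined $s_0$, so that case must be checked directly from the initial data (it holds, since $t_3=-\det(B)\tr(B)=\tr(B)\,t_2-\det(B)\,t_1$); this is precisely the ``consistency of initial data'' verification you mention, so no real gap. Finally, your Cayley--Hamilton remark is not merely conceptual background: it is essentially the computation $X^n\equiv X\cdot(s_{n-1}X+t_{n-1}E)\pmod{c(X)}$ with which the paper opens its own proof to justify the coupled system in the first place.
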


\begin{proof}

$X^n = X \cdot X^{n-1} \underset{c(X)}  \equiv X \cdot (s_{n-1}X + t_{n-1} E) = s_{n-1}X^2 + t_{n-1} X \underset{c(X)}  \equiv s_{n-1}(\tr{B} X - \det{B} \cdot E) + t_{n-1} X = (s_{n-1} \tr{B} + t_{n-1}) X - s_{n-1} \det{B} \cdot E$

Or by definition of $s_n$ and $t_n$:
\begin{equation}\label{recur} \begin{cases} s_n = \tr{B} ~s_{n-1} + t_{n-1}\\
t_n = - \det{B}~ s_{n-1}
\end{cases}\end{equation}

By summing up first expression from \eqref{recur} multiplied by $\det{B}$ with the second one multiplied by $\tr{B}$ we get:
$$ \det{B}~ s_n + \tr{B}~ t_n = \det{B}~ t_{n-1} $$
or
$$ \det{B}~ s_n =  \det{B}~ t_{n-1} - \tr{B}~ t_n $$

Substituting into second equation of (2) we obtain:
$$t_n-\tr{B}~t_{n-1}+\det{B}~t_{n-2}=0$$
Since $s_n $ and $t_n$ are linearly dependant it follows that $s_n$ satisfy the same recurrent.
\end{proof}
\begin{thm}
 Let $n \geqslant 3$ and $A\in M_2(\mathbb{F}_p)$, $a=\tr {\sqrt[3]{A}}$.  If $A \not = c \cdot E$ for any $c \in \mathbb{F}_p$ and $R = \{B \in M_2(\mathbb{F}_p)  \mid B^n = A\}$  set of it's  $n$-th roots, then next inclusion follows:
$$R \subset \left\{B \in M_2(\mathbb{F}_p) \left|  B=\dfrac{A+b~ Q_{n-2}(a,b) \cdot I}{Q_{n-1}(a,b)} \right.,~ b^n =\det{A}, ~P_n(a,b) = \tr{A}.  \right\}$$


\end{thm}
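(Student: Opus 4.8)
The plan is to start from an arbitrary $B\in R$ and collapse all of its powers onto the two-dimensional algebra $\langle E,B\rangle$ via Cayley--Hamilton, reusing the recurrence on $s_n,t_n$ from the Lemma that immediately precedes the statement. Writing $a=\tr B$ and $b=\det B$ (this is the correct reading of the $a$ in the statement), Cayley--Hamilton gives $B^{2}=aB-bE$, so by the induction already carried out in that Lemma one has $B^{n}=s_nB+t_nE$ with $s_1=1,\ t_1=0$ and $s_n=as_{n-1}+t_{n-1}$, $t_n=-bs_{n-1}$. Since $B^{n}=A$, this is the single linear relation $A=s_nB+t_nE$ that I will invert to recover $B$.

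Next I would identify the coefficients $s_n,t_n$ with the symmetric polynomials of the statement. Let $\mu_1,\mu_2$ be the eigenvalues of $B$ in an algebraic closure of $\mathbb{F}_p$, so $a=\mu_1+\mu_2=e_1$ and $b=\mu_1\mu_2=e_2$. Reading $B^{n}=s_nB+t_nE$ on eigenvalues gives $\mu_i^{n}=s_n\mu_i+t_n$ for $i=1,2$; subtracting the two equations yields, when $\mu_1\neq\mu_2$,
\[
s_n=\frac{\mu_1^{n}-\mu_2^{n}}{\mu_1-\mu_2}=h_{n-1}(\mu_1,\mu_2),\qquad t_n=-\mu_1\mu_2\,h_{n-2}(\mu_1,\mu_2).
\]
Because $s_n,t_n$ are, by their recurrence, fixed polynomials in $a,b$, and because $h_{n-1},h_{n-2}$ are by definition $Q_{n-1}(e_1,e_2),Q_{n-2}(e_1,e_2)$, these coincidences are polynomial identities in the indeterminates $e_1,e_2$; established on the locus $\mu_1\neq\mu_2$ they persist everywhere, in particular at repeated eigenvalues. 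Hence $s_n=Q_{n-1}(a,b)$ and $t_n=-b\,Q_{n-2}(a,b)$.

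Substituting back, the relation becomes $A=Q_{n-1}(a,b)\,B-b\,Q_{n-2}(a,b)\,E$. Here the hypothesis $A\neq cE$ does the essential work: if $Q_{n-1}(a,b)=s_n$ vanished then $A=B^{n}=t_nE$ would be scalar, a contradiction, so $Q_{n-1}(a,b)\neq 0$ and I may divide to obtain
\[
B=\frac{A+b\,Q_{n-2}(a,b)\,E}{Q_{n-1}(a,b)},
\]
with $E$ the identity written $I$ in the statement. The two side constraints then fall out of the standard invariants: $\det A=\det(B^{n})=(\det B)^{n}=b^{n}$, and $\tr A=\tr(B^{n})=\mu_1^{n}+\mu_2^{n}=p_n(\mu_1,\mu_2)=P_n(a,b)$. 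This places every $B\in R$ in the asserted set and proves the inclusion.

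The main obstacle I anticipate is not the algebra of the reduction but the legitimacy of the identifications $s_n=Q_{n-1}$ and $t_n=-b\,Q_{n-2}$ as \emph{formal} identities over $\mathbb{F}_p$: one must argue in the polynomial ring in two indeterminates (or over $\mathbb{Z}$ and then reduce) rather than numerically, so that the formula survives repeated eigenvalues and small fields where only finitely many evaluation points are available. Once that is settled, the same single expression governs the split, inert, and confluent spectral cases uniformly, which is precisely the uniformity the theorem claims.
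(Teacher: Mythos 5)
Your proposal is correct and follows essentially the same route as the paper: Cayley--Hamilton reduction to $A=B^n=s_nB+t_nE$, identification $s_n=h_{n-1}(\mu_1,\mu_2)=Q_{n-1}(a,b)$ and $t_n=-b\,h_{n-2}(\mu_1,\mu_2)=-b\,Q_{n-2}(a,b)$ computed in the eigenvalue basis, the observation that $s_n=0$ would force $A=t_nE$ to be scalar (contradicting $A\neq cE$), and the invariant constraints $\det A=b^n$, $\tr A=P_n(a,b)$. The one place where you genuinely diverge is the confluent/singular case ($\mu_1=\mu_2$ or $\det B=0$): the paper handles it by asserting that $D_n(\lambda_1,\lambda_2)=h_{n-1}-\tr B\,h_{n-2}+\det B\,h_{n-3}$ is a \emph{continuous} function which vanishes on the locus $\lambda_1\neq\lambda_2$, $\lambda_i\neq 0$ and hence vanishes identically --- an argument that is not meaningful over $\mathbb{F}_p$ (the topology is discrete, and a nonzero polynomial can vanish at every point of a finite field, e.g.\ $x^p-x$). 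Your proposed fix --- establish $s_n=Q_{n-1}$, $t_n=-b\,Q_{n-2}$ as \emph{formal} identities in the polynomial ring in two indeterminates (equivalently over $\mathbb{Z}$, then reduce mod $p$), so that they specialize to repeated eigenvalues and to small fields automatically --- is exactly the right repair, and it can be carried out either by induction on the recurrence using the standard identity $h_{n-1}=e_1h_{n-2}-e_2h_{n-3}$, or by the generic-point argument in the fraction field of $\mathbb{Z}[\mu_1,\mu_2]$ where $\mu_1\neq\mu_2$ holds by fiat. So your write-up is not merely equivalent to the paper's proof; on this step it is strictly more rigorous.
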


\begin{proof}

Let $B \in M_2(\mathbb{F}_p)$ be a root of equation $X^n = A$. Also consider it's C.H.E. $$c(X) = X^2 - \tr{B} X + \det{B} \cdot E.$$
Then $X^n \underset{c(X)}  \equiv s_n X + t_n E$ for some $s_n, t_n \in \mathbb{F}_p$ and since $c(B) = 0$ we have
\begin{equation}\label{recursion for A}
A = s_n B +t_n E.
 \end{equation}


Since $X^1 \underset{c(X)}  \equiv X + 0 \cdot E$ and $X^2 \underset{c(X)}  \equiv \tr{B} X - \det{B} \cdot E$, we have $s_1 = 1, t_1 =0, s_2 = \tr{B}$ and $t_2 = - \det{B}$.

Consider algebraic closure of $\mathbb{F}_p$ --- $\widehat{\mathbb{F}_p}$. Let $\lambda_1, \lambda_2$ be roots of $c(x)$ in $\widehat{\mathbb{F}_p}$ (eigenvalues of B).

\begin{enumerate}

\item If $\lambda_1 \not=\lambda_2$ and $\lambda_1 \lambda_2 = \det{B} \not= 0$:

$$s_n = c_1 \lambda_1^n + c_2 \lambda_2^n, ~t_n = c_1' \lambda_1^n + c_2' \lambda_2^n$$

In cases $n=1,2$ for $s_n$ we get:

$$\begin{cases} c_1 \lambda_1 + c_2 \lambda_2 = 1,\\
c_1 \lambda_1^2 + c_2 \lambda_2^2 = \tr{B}
\end{cases}$$

Solving the system using Kramer's rule we obtain: $$c_1 = \dfrac{\lambda_2^2 - \lambda_2  \tr{B}}{\lambda_1 \lambda_2^2 - \lambda_1^2 \lambda_2} = -\dfrac{1 }{\lambda_2 - \lambda_1}, ~c_2 =  \dfrac{\lambda_1 \tr{B} - \lambda_2^2}{\lambda_1 \lambda_2^2 - \lambda_1^2 \lambda_2} =  \dfrac{1}{\lambda_2 - \lambda_1}$$

Substituting constants

\begin{equation} s_n = \dfrac{\lambda_2^n - \lambda_1^n}{\lambda_2 - \lambda_1} = h_{n-1}(\lambda_1, \lambda_2) \end{equation}

In cases $n=1,2$ for $t_n$ we get:

$$\begin{cases} c_1' \lambda_1 + c_2' \lambda_2 = 0,\\
c_1' \lambda_1^2 + c_2' \lambda_2^2 = - \det{B}
\end{cases}$$

Solving the system using Kramer's rule we obtain: $$c_1' = \dfrac{\lambda_2 \det{B}}{\lambda_1 \lambda_2^2 - \lambda_1^2 \lambda_2} = \dfrac{\lambda_2 }{\lambda_2 - \lambda_1}, ~c_2' = - \dfrac{\lambda_1 \det{B}}{\lambda_1 \lambda_2^2 - \lambda_1^2 \lambda_2} =  -\dfrac{\lambda_1 }{\lambda_2 - \lambda_1}$$

Substituting constants

\begin{equation} t_n = \dfrac{\lambda_1^n\lambda_2-\lambda_1\lambda_2^n}{\lambda_2 - \lambda_1}= -\det{B} \cdot \dfrac{\lambda_1^{n-1}-\lambda_2^{n-1}}{\lambda_1-\lambda_2} = -\det{B}~ h_{n-2}(\lambda_1, \lambda_2) \end{equation}

\item In general case for each $n \ge 3$ we consider polynomial $D_n(\lambda_1, \lambda_2) = h_{n-1} - \tr{B} h_{n-2} + \det{B} h_{n-3}$. It's a continuous function of variables $\lambda_1, \lambda_2$.

Previously we proved that $D_n(\lambda_1, \lambda_2)=0$ if $\lambda_1 \not= \lambda_2$ and $\lambda_i \not = 0$.

From continuity follows that $D_n(\lambda_1, \lambda_2)=0$  $\forall \lambda_1, \lambda_2$ and hence formulas (3) and (4) are fulfilled $\forall \lambda_1, \lambda_2$.
\end{enumerate}

Now that we have found $s_n$ and $t_n$ we return to equation \eqref{recursion for A}. If $s_n=0$, then $A = t_n I$ which contradicts conditions of the theorem. Dividing both sides by $s_n$ we get formula

$$B = \dfrac{A - t_n I}{s_n} = \dfrac{A + \det{B}~ h_{n-2}(\lambda_1, \lambda_2) \cdot I}{h_{n-1}(\lambda_1, \lambda_2)}=\dfrac{A + \det{B}~ Q_{n-2}(\tr{B}, \det{B}) \cdot I}{Q_{n-1}(\tr{B}, \det{B})}$$

The last thing remaining is to express $\det{B}$ and $\tr{B}$ in terms of $A$.

Since $\det{A} = \det{B^n} = \det{B}^n$, $\det{B}$ can be obtain as root of polynomial $x^n=\det{A}$.

To find $\tr{B}$ we compute trace from both sides of \eqref{recursion for A}:
\begin{multline*} \tr{A} =  \tr{B}~ s_n +  2~ t_n = \tr{B}~ h_{n-1}(\lambda_1, \lambda_2) - 2 \det{B}~h_{n-2}(\lambda_1, \lambda_2) =\\=  h_{n}(\lambda_1, \lambda_2) -  \lambda_1\lambda_2~h_{n-2}(\lambda_1, \lambda_2) = \lambda_1^n+\lambda_2^n = p_n(\lambda_1, \lambda_2) = P_n(\tr{B}, \det{B}). \end{multline*}
\end{proof}

\section{Conclusion }
New linear group which is storage of square roots from $ SL_2({\mathbb{F}_p})$ is found and investigated by us.

The analytic formula of cubic square from matrix  $ SL_2({\mathbb{F}})$ is founded.
The the analytical formulas of square and 4-th power roots in $ SL_2({\mathbb{F}_p})$, $ ESL_2({\mathbb{F}_p})$, for any prime $p$,  as well as in $ SL_2({Z})$  $ ESL_2({Z})$ and in $ SL_2({k}), ESL_2({k})$, where $k$ is arbitrary perfect field, is found by us.

The analytic formula of cubic square from matrix  $ SL_2({\mathbb{F}})$ is founded.
The analytical formula of square and 4-th power roots in $ SL_2({\mathbb{F}_p})$, $ ESL_2({\mathbb{F}_p})$, for any prime $p$,  as well as in $ SL_2({Z})$,  $ ESL_2({Z})$ and in $ SL_2({k}), ESL_2({k})$, where $k$ is arbitrary perfect field, is found by us.
Furthermore the recursive formula of square and $n$-th power roots in $ SL_2({\mathbb{F}_p})$ is found by us.

The criterions of matrix equation $X^2=A$ solvability over different linear groups with respect to matrix classification by its $tr(A)$ and type of space contracting is found and proved in this paper.

The criterion of roots existing for different classes of matrix --- simple and semisimple matrixes from $ SL_2({\mathbb{F}_p})$, $ SL_2({\mathbb{Z}})$ are established.

If a matrix $A\in G{{L}_{2}}({{F}_{p}})$ is semisimple with different eigenvalues and at least one an eigenvalue ${{\lambda }_{i}} \in F_{p^2} \setminus F_{p} $, $ i\in \{1,2\}$, then $\sqrt{A}\in G{{L}_{2}}({{F}_{p}})$ iff $A$ satisfies:

 \begin{equation*}
(\frac{{{\lambda }_{i}}}{p})=1
 \, \, in \, \, the \, \, algebraic \, \, extention \, \, of \, \, degree \, \, 2 \, \, that \, \, is \, \, {{F}_{p^2}}.
 \end{equation*}

\textbf{Acknowledgement}. Special thanks to Natalia Vladimirovna Maslova for seminars provided by her and her good questions by the topic.


%
%


\end{document}